\definecolor{darkblue}{rgb}{0,0,0.7} 
\definecolor{darkred}{rgb}{0.7,0,0} 
\definecolor{green}{RGB}{57,181,74} 
\definecolor{violet}{RGB}{147,39,143} 
\newtheorem{theorem}{Theorem}
\newtheorem{lemma}[theorem]{Lemma}
\newtheorem{proposition}[theorem]{Proposition}
\newtheorem*{theorem*}{Theorem}
\theoremstyle{definition}
\newtheorem{example}[theorem]{Example}
\newcommand{\helena}[1]{{\bf \textcolor{orange!90}{Helena: #1}}}
\newcommand{\alex}[1]{{\bf \textcolor{blue!50}{Alex: #1}}}
\DeclareMathOperator{\N}{\mathbb{N}}
\DeclareMathOperator{\calG}{\mathcal{G}}
\renewcommand{\phi}{\varphi}
\def\inst#1{$^{#1}$}
\begin{document}

\title{Subgraph-universal planar graphs for trees\footnote{
M.\ Scheucher was supported by DFG Grant SCHE~2214/1-1. 
H.\ Bergold was partially supported by DFG Research Training Group 'Facets of Complexity' (DFG-GRK~2434). A.~Wesolek was supported by the DFG under Germany’s Excellence Strategy – The Berlin Mathematics Research Center MATH+ (EXC-2046/1, project ID: 390685689).
}}

\date{}

\author[H. Bergold, V. Ir\v si\v c, R. Lauff, J. Orthaber, M. Scheucher, A. Wesolek]{
Helena Bergold\inst{1}
\and 
Vesna Ir\v si\v c\inst{2}
\and 
Robert Lauff\inst{3}
\and 
Joachim Orthaber\inst{4}
\and 
Manfred Scheucher\inst{3}
\and 
Alexandra Wesolek\inst{3}
}

\title{Subgraph-universal planar graphs for trees}

\date{}

\begin{abstract}
We show that there exists an outerplanar graph on $O(n^{c})$ vertices for $c = \log_2(3+\sqrt{10}) \approx 2.623$ that contains every tree on $n$ vertices as a subgraph.
This extends a result of Chung and Graham from 1983 who showed that there exist (non-planar) $n$-vertex graphs with $O(n \log n)$ edges that contain all trees on $n$ vertices as subgraphs and a result from Gol'dberg and Livshits from 1968 who showed that there exists a universal tree for $n$-vertex trees on $n^{O(\log(n))}$ vertices.

Furthermore, we determine the number of vertices needed in the worst case for a planar graph to contain three given trees as subgraph to be on the order of $\frac{3}{2}n$, even if the three trees are caterpillars. This answers a question recently posed by Alecu et al. in 2024. 

Lastly, we investigate (outer)planar graphs containing all (outer)planar graphs as subgraph, determining exponential lower bounds in both cases. We also construct a planar graph on $n^{O(\log(n))}$ vertices containing all $n$-vertex outerplanar graphs as subgraphs.
\end{abstract}

\maketitle


\begin{center}
{\footnotesize
\inst{1} 
Technical University of Munich, Germany \\
TUM School of Computation, Information and Technology,
Department of Computer Science, Germany \\
Freie Universit\"at Berlin, Germany\\
\texttt{\{helena.bergold\}@tum.de}
\\\ \\
\inst{2} 
Faculty of Mathematics and Physics, University of Ljubljana, Slovenia\\
Institute of Mathematics, Physics and Mechanics, Ljubljana, Slovenia\\
\texttt{vesna.irsic@fmf.uni-lj.si}
\\\ \\
\inst{3} 
Institut f\"ur Mathematik, Technische Universit\"at Berlin, Germany\\
\texttt{\{lauff, scheucher, wesolek\}@math.tu-berlin.de}
\\\ \\
\inst{4} 
Institute of Software Technology, Graz University of Technology, Austria\\
\texttt{orthaber@ist.tugraz.at}
}
\end{center}



\subsection*{Acknowledgments} H.~Bergold was partially supported by the DFG Research Training Group 
`Facets of Complexity' (DFG-GRK~2434). V.~Ir\v{s}i\v{c} was supported by the Slovenian Research and Innovation Agency (ARIS) under the grants Z1-50003, P1-0297, and N1-0285, and by the European Union (ERC, KARST, 101071836). J.~Orthaber was supported by the Austrian Science Fund (FWF) grant W1230. M.~Scheucher was supported by the DFG Grant SCHE~2214/1-1. A.~Wesolek was supported by the Deutsche Forschungsgemeinschaft (DFG, German Research Foundation) under Germany’s Excellence Strategy – The Berlin Mathematics Research Center MATH+ (EXC-2046/1, project ID: 390685689).

We would like to thank Louis Esperet and Cyril Gavoille for insightful remarks on the paper and Stefan Felsner for helpful discussions and pointing us towards reference~\cite{TutteCensus1962}. 




\section{Introduction}
\label{sec:intro}
Nash-Williams' tree-covering theorem implies that every planar graph contains three spanning trees that cover all edges.
A natural question to ask is whether the converse is true, that is, whether any three trees on $n$ vertices are contained in some planar graph on $n$ vertices (as discussed by Alecu et al.~\cite{alecu2022treewidth}).
We show that this is false even when the three trees are caterpillars and the planar graph has $cn$ vertices for $c<\frac{3}{2}$.
On the other hand, every three trees on $n$ vertices are simultaneously contained in some planar graph on $\frac{3}{2}n$ vertices.

While from a structural perspective it is interesting to determine how to design a small planar graph that contains three given trees, the motivation of embedding multiple trees on $n$ vertices simultaneously in a (planar) graph has a long history, which originates in VLSI design.
For example, Valiant~\cite{valiant1976universal,MR0605722} used universal graphs to construct universal circuits.
A host graph $H$ is called \emph{(subgraph-)universal} for a family of graphs $\calG$
if every graph $G \in \calG$ appears as a subgraph of~$H$. 
Already in 1978,
Chung and Graham~\cite{ChungGraham1983}
showed that
an $n$-vertex graph that is subgraph-universal for $n$-vertex trees and is edge-minimal has $\Theta(n \log n)$ edges. Comparing the number of vertices to edges, their construction is far from planar and in fact the graph contains cliques of size $\Omega(\log(n))$. 
Finding small universal graphs with more restricting criteria is much harder. For host graphs that are trees, the first result from Gol'dberg and Livshits~\cite{gol1968minimal} from 1968 shows that there exists a universal tree for $n$-vertex trees on $n^{O(\log(n))}$ vertices.
Their construction was shown to be tight up to a polynomial factor in $n$ by Chung, Graham and Coppersmith~\cite{ChungGrahamCoppersmith1981} and they computed the right asymptotics up to the polynomial factor.
Several other graph classes are known to be universal for trees. It is known that hypercubes, expanding graphs and antiregular graphs are all universal for trees \cite{Wu1985, FriedmanPippenger1987, Merris2003}. 

In 1982, Babai et al.~\cite{BabaiChungErdösGrahamSpence1982} started the systematic study of universal graphs for sparse graph classes.
They gave a lower bound of $\Omega(\frac{n^2}{\log n})$ on the number of edges of a universal graph for the class of graphs with $cn$ edges for $c>1$, and they showed that asymptotically much less than the trivial upper bound $\binom{n}{2}$ is needed. 
Regarding planar graphs, they showed that there is a universal graph with $O(n^{3/2})$ edges.
After several improvements on the upper bound~\cite{BabaiChungErdösGrahamSpence1982,BonamyShorter2022}, recently Esperet et al.~\cite{Esperet2023Sparse} constructed a universal graph for all $n$-vertex planar graphs on $n^{1+o(1)}$ edges and $O(n)$ vertices by improving a construction of Dujmović et al.~\cite{DujmoviEtAl2021} with $n^{1+o(1)}$ edges and vertices. Not much has been previously known when restricting the host graph to be planar. A trivial upper bound on the number of vertices of a minimal planar universal graph for $n$-vertex planar graphs is the number of triangulations on $n$ vertices times $n$, which is $O((256/27)^n)$ as shown by Tutte~\cite{TutteCensus1962}. We give the first exponential lower bound. Previous research focused on universality for infinite graphs. Pach~\cite{PachProblem1981} showed that there is no countable planar graph containing all countable planar graphs, answering a question of Ulam. Recently, Huynh et al.~\cite{huynh2021universality} strengthened this result by showing that every countable graph that contains all countable planar graphs contains an infinite clique as a minor. 

\subsection{Our contribution.}
We continue the line of research from \cite{ChungGraham1983} and \cite{ChungGrahamCoppersmith1981} and study planar host graphs for trees.
As our main result, we construct planar graphs with polynomially many vertices that are subgraph-universal for all $n$-vertex trees.
In fact, our constructions are even outerplanar.

\begin{restatable}{theorem}{UniversalOuterplanar}\label{thm:polynomial_bound}
For every $n \in \mathbb{N}$, there exists an outerplanar graph on $O(n^{c})$ vertices, where $c = \log_2(3 + \sqrt{10}) \approx 2.623$, that contains all $n$-vertex trees as subgraphs.
\end{restatable}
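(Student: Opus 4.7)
The plan is to construct outerplanar graphs $H_n$ by induction on $n$, show each $H_n$ is subgraph-universal for trees on $n$ vertices, and bound $|V(H_n)|$ via a divide-and-conquer recurrence whose solution is $O(n^c)$. The target exponent $c=\log_2(3+\sqrt{10})$ is exactly such that $2^c=3+\sqrt{10}$ is the larger root of $x^2-6x-1=0$, so the construction is arranged to produce a recurrence of the form
\[
f(n) \;\le\; 6\,f(n/2) \;+\; f(n/4) \;+\; O(n),
\]
which by standard Akra--Bazzi style analysis unrolls to $O(n^c)$.

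For the construction of $H_n$, I would place a linear ``spine'' on $\Theta(n)$ vertices along the outer face, then glue six copies of $H_{n/2}$ together with one copy of $H_{n/4}$ onto the spine, each attached at a single spine vertex on the same side of the spine. Since each smaller piece meets the spine in only a single vertex, every vertex of $H_n$ can be kept on the outer face, so the graph stays outerplanar. Small constant-size graphs serve as base cases.

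For universality I would use a centroid decomposition of the input tree $T$: pick a centroid $v$ so that every component of $T-v$ has size $\le n/2$, and classify these components as \emph{heavy} (size $>n/4$) or \emph{light} (size $\le n/4$). At most three components can be heavy, so the heavy components each fit inductively into one of the copies of $H_{n/2}$; the factor six is meant to absorb the different cyclic orderings and reflections of up to three heavy children around $v$ that can arise in an outerplanar layout. The light components are then packed simultaneously into the copy of $H_{n/4}$, after merging them into a single auxiliary forest; to make this work cleanly, I would strengthen the induction hypothesis so that each $H_m$ is universal for forests on at most $m$ vertices in total, not only for trees.

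The hardest part will be the light-component packing step, since a naive centroid decomposition only bounds the combined light budget by $n-1$, not by $n/4$. To obtain the $n/4$ bound one has to refine the decomposition — for instance by iterating the centroid argument on the light side, by passing to a heavy-path rather than a centroid split, or by absorbing a few ``medium'' subtrees into the $H_{n/2}$ slots — and then verify that the outerplanar embedding still goes through. Carrying out this refinement while keeping the constants in the recurrence exactly $6$ and $1$ is the delicate bookkeeping step that ultimately produces the exponent $\log_2(3+\sqrt{10})$.
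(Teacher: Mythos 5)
Your high-level strategy (centroid split, divide-and-conquer on an outerplanar host, a recurrence whose characteristic equation $x^2=6x+1$ yields the exponent $\log_2(3+\sqrt{10})$) has the right flavor, but there are two genuine gaps. The first you flag yourself: the light components of $T-v$ can have total size close to $n$, not $n/4$, so a single copy of $H_{n/4}$ cannot host them; none of your suggested fixes is carried out, and it is unclear that any of them preserves the constants $6$ and $1$. The paper avoids this issue entirely: it does not distinguish heavy from light children, but packs \emph{all} components of $T-J$ (each of size at most $n/2$) greedily into several subtriangulations that share the vertex hosting $J$, using the budget inequality $\sum_j 2^{\lceil\log_2|T_j|\rceil}\le 2\sum_j|T_j|\le 2^{d+1}$ (\Cref{lem:embedding_lemma}); the exponent then emerges from a coupled linear recurrence for \emph{two} host families, not from a $6f(n/2)+f(n/4)$ split.

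The second gap is structural and is the actual crux of the paper's proof: propagating adjacency constraints through the recursion. When a component $T_j$ of $T-J$ is embedded into a smaller host glued to the rest at a single vertex, the neighbour $r_j$ of $J$ in $T_j$ must land adjacent to the image of $J$, so that attachment vertex is already consumed by one constraint. One level deeper, the component of $T_j$ containing $r_j$ carries \emph{two} independent adjacency constraints, to the image of $J$ and to the image of $T_j$'s own centroid, and these targets are distinct vertices of the host; a piece glued on at a single cut vertex cannot satisfy both. The paper resolves this by maintaining an invariant of at most two marked vertices per subtree, introducing a second host family $S_d^2$ with two designated outer attachment vertices, and invoking the tree-median lemma (\Cref{lem:separate-3-vertices}) to pre-split any subtree that would otherwise accumulate three marked vertices. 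Your proposal has no mechanism for this, and without one the induction does not close. Relatedly, every component whose root must be adjacent to $J$ has to lie in a copy containing the single vertex where $J$ is placed, so attaching your seven copies at \emph{different} spine vertices cannot realize the star of edges from $J$ to the roots $r_j$.
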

Our result shows that imposing strong structural properties on the host graph does not necessarily blow up the number of its vertices (as is the case when restricting the host graph to be a tree).
On the other side, we show that there are no subexponential sized planar graphs that contain all $n$-vertex planar graphs.

\begin{restatable}{theorem}{planarlowerbound}
\label{thm:planarlowerbound}
    A planar universal graph for $n$-vertex planar graphs has $\Omega(n^{-3/2} (27/4)^n)$ vertices.
\end{restatable}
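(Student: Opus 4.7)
My approach is a counting argument using a rich family of $n$-vertex planar graphs whose cardinality matches the stated bound. The natural candidate is the family $\mathcal{F}_n$ of (face-rooted) plane Apollonian networks, i.e.\ stacked triangulations obtained from a triangle by iteratively splitting a triangular face via a new vertex joined to the three boundary vertices. Via the classical bijection with rooted plane ternary trees on $n-3$ internal nodes, one has
\[
|\mathcal{F}_n| \;=\; \frac{1}{2n-5}\binom{3n-9}{n-3},
\]
which by Stirling is $\Theta(n^{-3/2}(27/4)^n)$, matching the target lower bound up to a constant factor. All graphs in $\mathcal{F}_n$ are planar, so they are candidates for subgraphs of any universal planar $H$.

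The key rigidity observation is the following. Every Apollonian network $G \in \mathcal{F}_n$ is a triangulation with exactly $3n-6$ edges, the maximum possible for an $n$-vertex planar graph. Hence, if $G$ occurs as a subgraph of a planar host graph $H$ via some injection $\varphi\colon V(G)\to V(H)$, then $H[\varphi(V(G))]$ has at most $3n-6$ edges and contains $G$, so in fact $G \cong H[\varphi(V(G))]$. Thus every Apollonian $G$ must appear as an \emph{induced} subgraph of $H$, and by Whitney's theorem (since $G$ is $3$-connected for $n\ge 4$) its plane embedding is uniquely determined and must agree with the restriction of a plane embedding of $H$.

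From here, the plan is to count rooted embedded Apollonians in $H$ by summing over pairs $(\Delta, F)$, where $\Delta$ is a triangle of $H$ and $F$ is one of the two faces of a fixed plane embedding of $H$ bounded by $\Delta$. Since planar $H$ has at most $2N-4$ faces and $O(N)$ triangles in its $1$-skeleton, the number of such pairs is $O(N)$. If each pair $(\Delta, F)$ supports at most a bounded (say $O(1)$, or more generously $O(\mathrm{poly}(n))$) number of rooted Apollonian subgraphs of $H$ on $n$ vertices with root triangle $\Delta$ and all remaining vertices inside $F$, then together with the rigidity step we obtain $|\mathcal{F}_n| \le O(N)\cdot\mathrm{poly}(n)$, i.e.\ $N = \Omega(n^{-3/2}(27/4)^n)$.

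The main obstacle is establishing this per-pair bound. The $K_{3,3}$-freeness of $H$ immediately gives at most two common neighbours of any triangle, producing a branching factor of $2$ per subdivision step and only a weaker bound on the order of $(27/8)^n$. To recover the exact exponent $27/4$, the argument has to exploit that the plane embedding of $H$ selects which of the two candidates lies on the chosen side $F$, collapsing the per-step branching from $2$ to $1$; the combinatorial factor $(27/4)^n/n^{3/2}$ then comes purely from the ternary-tree enumeration, and the factor $O(N)$ from the choice of root face. Making this "unique apex per side" argument precise, and handling the degenerate cases where the candidate apex fails to lie inside $F$, is the technical heart of the proof.
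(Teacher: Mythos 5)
Your overall strategy---exhibit $\Theta(n^{-3/2}(27/4)^n)$ pairwise non-isomorphic $n$-vertex planar graphs and show that a planar host cannot pack them efficiently---is the right one, and your rigidity step is correct (edge-maximality forces each copy to be induced, and $3$-connectivity plus Whitney fixes its embedding). The fatal problem is the choice of family. Stacked triangulations nest inside one another extremely efficiently: the uniformly stacked triangulation $S_{n-3}$, obtained by stacking a new vertex into \emph{every} bounded face for $n-3$ rounds, has only $(3^{n-3}+5)/2=\Theta(3^n)$ vertices, yet it contains \emph{every} $n$-vertex stacked triangulation as a subgraph rooted at its single outer triangle, because every ternary tree on $n-3$ nodes is a root-containing subtree of the complete ternary tree of depth $n-3$. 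Since $3^n=o\bigl(n^{-3/2}(27/4)^n\bigr)$, no counting argument based on your family $\mathcal{F}_n$ can prove the stated bound; in particular the per-pair bound you identify as the ``technical heart'' is not merely hard but false---a single pair $(\Delta,F)$ can support $|\mathcal{F}_n|$-many distinct rooted Apollonian subgraphs. Your ``unique apex per side'' observation is true (two common neighbours of a triangle cannot both lie strictly inside it), but it only pins down the apex of a face you have \emph{chosen} to subdivide; the host is free to offer an apex in every face simultaneously, and the exponential freedom lies in choosing \emph{which} faces to subdivide.

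The paper avoids this by taking the family to be the $n$-vertex \emph{$4$-connected} triangulations, of which there are $\Theta(n^{-5/2}(27/4)^n)$ by Tutte's census. Four-connectivity is exactly what kills nesting: if two such triangulations $H_1,H_2$ sit in a plane embedding of the host and $H_2$ had vertices both strictly inside and strictly outside a triangle of $H_1$, that triangle would yield a cut of size at most $3$ in $H_2$. Hence any two copies either meet only on their outer faces or one lies inside a single triangular face of the other, sharing at most $3$ vertices. Taking a linear extension of the resulting containment order and peeling off copies one at a time, each copy contributes $n-3$ interior vertices private to it, giving $|V(G)|\ge (n-3)\cdot\Theta(n^{-5/2}(27/4)^n)$. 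If you want to keep your framework, replacing Apollonian networks by $4$-connected triangulations is the essential repair; after that, no per-root counting is needed at all.
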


We also show an exponential lower bound on the number of vertices of an outerplanar universal graph for $n$-vertex outerplanar graphs and construct for this class a planar universal graph with $n^{\theta(\log(n))}$ vertices.
The paper is organized as follows. Theorem~\ref{thm:polynomial_bound} is proven in Section~\ref{sec:universal_construction}.
The main idea of the proof is to use stacked triangulations, which was inspired by a computer search that is presented in Section~\ref{sec:computer}.
The converse of the Nash-Williams' tree-covering theorem is discussed in Section~\ref{sec:3_trees}.
In Section~\ref{sec:planaruniversal} we discuss (outer)planar universal graphs containing all $n$-vertex (outer)planar graphs. Some directions for further research are given in Section~\ref{sec:concluding}. 

\subsection{Further related work} A graph $H$ is called \emph{induced-universal} if every graph \mbox{$G \in \calG$} appears as an induced subgraph of $H$. Induced-universal graphs are important for designing adjacency labeling schemes.  An \emph{adjacency labeling scheme} for a family of graphs $\calG$ is an assignment of labels to the vertices
 of each graph in $\calG$ such that given the labels of two vertices in the graph it is possible to determine whether or not the vertices are adjacent in the graph. Kannan, Naor and Rudich~\cite{MR1186827} showed that a family $\calG$ of graphs has a $k$-bit adjacency labeling scheme with unique labels if and only if there is an induced-universal graph for $\calG$ on at most $2^k$ vertices.

Already in 1964, Rado \cite{Rado1964} constructed a countably infinite graph that is induced-universal for all graphs, that is, each finite graph appears as an induced subgraph.
Only one year later, Moon \cite{Moon1965} showed that the minimum number of vertices of an induced-universal graph for all $n$-vertex graph has size at least $\sqrt{2}^{n-1}$ and Alon \cite{MR3613451} showed this is asymptotically tight.
In subsequent literature, various subclasses of graphs have been investigated intensively such as  
graphs \cite{BabaiChungErdösGrahamSpence1982}, 
planar graphs \cite{DujmoviEtAl2021,Esperet2023Sparse}, 
(planar) graphs with bounded degree \cite{BhattSandeepChungLeightonRosenberg1989}, 
trees 
and
caterpillars \cite{ChungGrahamShearer1981,ChungGraham1983}.
We refer the interested reader to the
Master's thesis of Christian Kouekam \cite[Table 6.1]{KouekamMaster21}. 
It summarizes known bounds on the minimum number of vertices of a graph that is induced-universal for various graph classes.

\section{A universal outerplanar graph for trees}
\label{sec:universal_construction}

The goal of this section is to construct, for every $n\in\N$, an (outer)planar graph $G$ on as few vertices as possible such that all $n$-vertex trees are contained in $G$ as subgraphs.
We will see in the following that a number of vertices polynomial in $n$ is sufficient for $G$.

As a starting point, we inductively define the \emph{uniformly stacked triangulation}\footnote{Stacked triangulations are also known as \emph{planar 3-trees} or \emph{Apollonian networks} in the literature.} $S_d$ in the plane with \emph{depth} $d \in \mathbb{N}$ as follows:
$S_0$ is a triangle determined by three fixed points $o_1,o_2,o_3$ not lying on a common line.
For $d > 0$, we define $S_d$ by subdividing each bounded triangular face $D$ of $S_{d-1}$ into three triangles by placing a new point in the interior of~$D$ and connecting it to the three vertices of~$D$.
See \Cref{fig:S3} for the uniformly stacked triangulation $S_3$.
By construction, $S_d$ has $3^d$ bounded faces and $3 + 3^0 + 3^1 + \dots + 3^{d-1} = \frac{3^d+5}{2}$ vertices. 
Each of the three vertices $o_1,o_2,o_3$, which we call the \emph{outer vertices} of $S_d$, is adjacent to exactly $2^d-1$ interior vertices of~$S_d$. 
Since we deal with several copies and substructures, we denote by $o_i(S_d)$ for $i =1,2,3$ the outer vertex $o_i$ of the copy of $S_d$.

\begin{figure}[htb]
\centering
\subcaptionbox{\label{fig:S3}}[.49\textwidth]{\includegraphics[page=1]{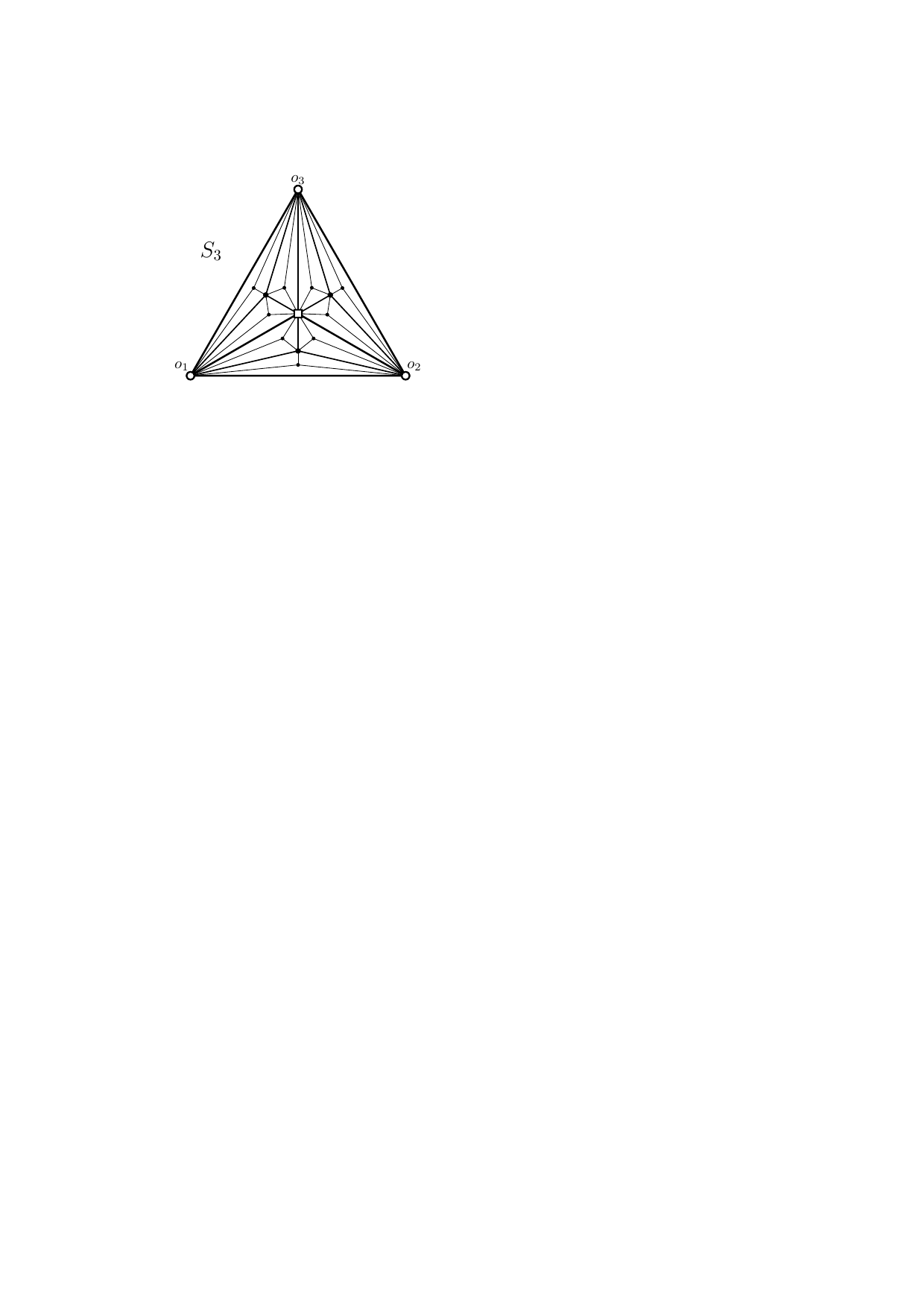}}
\subcaptionbox{\label{fig:Sd-3}}[.49\textwidth]{\includegraphics[page=2]{figures/stacked-new.pdf}}
\caption{\textsc{(A)}~The uniformly stacked triangulation $S_3$. \textsc{(B)}~The highlighted faces represent copies of $S_{d-3}$ that are incident to the center vertex in $S_d$.} 
\label{fig:uniform-stacked}
\end{figure}

For $d \geq 1$, we define the \emph{center vertex} $c  = c(S_d)$ of $S_d$ as the vertex that is part of $S_1$ but not~$S_0$.
Due to the recursive definition, each of the three triangles $\Delta_{c, o_1, o_2}$, $\Delta_{c, o_2, o_3}$, and $\Delta_{c, o_3, o_1}$ in $S_d$ hosts a copy of~$S_{d-1}$. 
Consequently, for $1 \leq i \leq d$, the center vertex is incident to $3 \cdot 2^{i-1}$ copies of $S_{d-i}$ in~$S_d$; see \Cref{fig:Sd-3} for an example with $i=3$. 

As an introductory example, we show that uniformly stacked triangulations on a polynomial number of vertices contain all $k$-ary trees on at most $n$ vertices.
Recall that a \emph{$k$-ary tree of height $h$} is a rooted tree where each vertex has exactly $k$ children except for the vertices on level $h$, which are all leaves.
The root is considered to be at level~$0$.

\begin{proposition}\label{prop:k-ary}
Every $k$-ary tree with height at most $\log_k(n)$ can be embedded into $S_d$ with depth 
$d =2\log_2(n)$.
\end{proposition}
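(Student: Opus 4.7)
The plan is to induct on the height $h$ and embed the tree recursively, sending the root of each subtree to the center of the current uniformly stacked triangulation and routing its $k$ children into $k$ vertex-disjoint sub-copies incident to that center. Define $i_k := \lceil \log_2(2k/3) \rceil$, i.e.\ the smallest $i \ge 1$ for which $3 \cdot 2^{i-1} \ge k$. The structural observation recalled just before the proposition (and illustrated in \Cref{fig:Sd-3}) then guarantees at least $k$ sub-copies of $S_{d - i_k}$ incident to $c(S_d)$. Each such sub-copy $S'$ has $c(S_d)$ as one of its three outer vertices, while its own center $c(S')$ is adjacent to all three outer vertices of $S'$, in particular to $c(S_d)$. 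The triangular regions corresponding to any two of these sub-copies meet only along their shared boundary, so their sets of non-outer (``interior'') vertices are pairwise disjoint.

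With this setup I will prove by induction on $h$ the following strengthened statement: whenever $d \ge i_k h + 1$, every $k$-ary tree $T$ of height $h$ admits an injective embedding into $S_d$ that maps the root of $T$ to $c(S_d)$ and uses no outer vertex of $S_d$. The base case $h = 0$ reduces to placing the unique vertex of $T$ at $c(S_d)$, which exists since $d \ge 1$. For the inductive step, I map the root of $T$ to $c(S_d)$, pick any $k$ sub-copies $S'_1, \dots, S'_k$ of $S_{d - i_k}$ incident to $c(S_d)$, and apply the inductive hypothesis to each subtree $T_j$ (of height $\le h - 1$) inside $S'_j$; the required depth bound $d - i_k \ge i_k(h - 1) + 1$ follows directly from our assumption. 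The induced image of the root $v_j$ of $T_j$ is $c(S'_j)$, adjacent to $c(S_d)$ in $S_d$, so all root-child edges are embedded correctly, and the sub-embeddings avoid both $c(S_d)$ and each other because they stay in the pairwise disjoint interiors of the $S'_j$'s.

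To apply the auxiliary claim to $d = 2 \log_2 n$ and $h \le \log_k n$ it suffices to check $i_k h + 1 \le 2 \log_2 n$. Using the crude estimate $i_k \le \log_2(2k/3) + 1$, a direct computation gives $i_k \cdot \log_k n \le \bigl(1 + (1 - \log_2(3/2))/\log_2 k\bigr) \log_2 n$, and since $\log_2 k \ge 1$ for $k \ge 2$ this is strictly less than $2 \log_2 n$; the additive $+1$ is easily absorbed once $n$ is at least a small constant, and the finitely many remaining small cases are trivial.

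The combinatorial content of the argument is light, so the main technical point I expect to require care is the disjointness bookkeeping: I need the ``avoid outer vertices'' invariant to pass cleanly from each recursive call back up the induction, so that the sub-embeddings in the distinct sub-copies $S'_j$ never collide with one another or with the already-placed center $c(S_d)$. Once that invariant is locked in, the rest is routine counting driven by the self-similar structure of $S_d$.
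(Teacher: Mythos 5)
Your proof is correct and follows essentially the same route as the paper's: induction on the height $h$, placing the root at the center vertex and recursing into the $3\cdot 2^{i_k-1}\ge k$ internally disjoint sub-copies of $S_{d-i_k}$ incident to it, with your $i_k=\lceil\log_2(2k/3)\rceil$ coinciding with the paper's parameter $s=\lceil\log_2(k/3)\rceil+1$ and the same depth recursion $d\ge i_k h+1$. You are in fact slightly more careful than the paper on the disjointness bookkeeping and on absorbing the additive $+1$ into the bound $2\log_2 n$.
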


\begin{proof}
First note that the central vertex of a copy of $S_d$ is adjacent to $3\cdot 2^{s} $ copies of $S_{d-1-s}$.

We show by induction that we can embed a $k$-ary tree of height $h = \log_k(n)$ into the interior vertices of a stacked triangulation $S_d$ for $d=h\cdot  (\lceil \log_2(k/3)\rceil+1)+1$ such that the root is the central vertex of $S_d$.
The induction is on $h$. Consider $s=\lceil \log_2(k/3) \rceil +1$ and therefore $k \leq 3\cdot 2^{s-1} $. Certainly we can embed a $k$-ary tree of height $h=0$ into the interior of $S_1$ such that the root is the central vertex. We assume now $h\geq 1$. Since the central vertex of $S_d=S_{hs+1}$ is incident to $3\cdot 2^{s-1}\geq k$ copies of $S_{(h-1)s+1}$, we can embed by induction all the $k$-ary trees of height $h-1$ of $S_d$ into the interior of the subcopies $S_{(h-1)s+1}$ such that the root is the central vertex of $S_{(h-1)s+1}$. 
We add the central vertex of $S_d$ to the graph and add its edges to the central vertices of the $S_{(h-1)s+1}$ subcopies. This gives an embedding of the $k$-ary tree of height $h$ into $S_d$ such that all its vertices are in the interior and the root is the central vertex. This proves the theorem by using the following inequality 
\begin{align*}
    d & = (\lceil \log_2(k/3) \rceil +1) \log_k(n)+1 \leq 2\log_2(n)+1. \hfill \qedhere
\end{align*}
\end{proof}

General trees $T$ are not as symmetric.
However, we can always find a vertex in $T$ that splits the tree into somewhat balanced subtrees.

\begin{lemma}[\cite{Jordan1869}]\label{lem:good root}
Every tree $T$ on $n$ vertices contains a vertex $J$ such that all components of $T-J$ contain at most $\frac{n}{2}$ vertices.
\end{lemma}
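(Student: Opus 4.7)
The plan is to consider the weight function $w \colon V(T) \to \N$ defined by letting $w(v)$ be the number of vertices in the largest component of $T - v$, and to argue that any minimizer $J$ of $w$ automatically satisfies $w(J) \leq n/2$. A useful preliminary observation is that whenever $w(v) > n/2$, the heavy component is unique: the sizes of the components of $T - v$ sum to $n - 1$, so at most one of them can exceed $n/2$.

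For the core argument, I would suppose toward a contradiction that $v$ minimizes $w$ but $w(v) > n/2$, let $C$ be the unique heavy component of $T - v$, and let $u$ be the neighbor of $v$ lying in $C$. Rooting $T$ at $v$, the components of $T - u$ split into (i) one outer component containing $v$, of size $n - |C| < n/2$, and (ii) the subtrees hanging off the children of $u$, each of which is properly contained in $C$ and hence has size at most $|C| - 1$. Therefore
\[
w(u) \leq \max\bigl(n - |C|,\, |C| - 1\bigr) < |C| \leq w(v),
\]
contradicting the minimality of $v$, and so $w(J) \leq n/2$.

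The whole proof is essentially a one-step potential decrease, so I do not foresee a serious obstacle; the only point that requires a little care is verifying the uniqueness of the heavy component so that the choice of $u$ is unambiguous. An equivalent and arguably more memorable algorithmic presentation is to start at any vertex and keep moving into the heavy component as long as one exists: since $w$ strictly decreases along this walk, it cannot revisit a vertex and must terminate at a vertex with the desired property.
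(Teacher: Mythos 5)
Your argument is correct and is essentially the same as the paper's (the paper only sketches it, as a walk that repeatedly moves into the too-large component until it terminates, which is exactly the algorithmic reformulation you give at the end). Your minimizer-plus-potential-decrease phrasing is just the rigorous version of that same idea, so no further comparison is needed.
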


We call such a vertex $J$ a \emph{Jordan separator} of $T$.
Using the Jordan separator, we mimic the embedding of the $k$-ary trees.
We place $J$ at the center vertex of the stacked triangulation and embed every subtree into the interior of a subtriangulation.
Recursively splitting each subtree again with a Jordan separator, we can decompose $T$ into components of constant size with $\log_{2}(n)$ such steps. 

For embedding $T$ into a stacked triangulation, note that in each subtree $T'$ we have a \emph{marked vertex} that needs to be adjacent to~$J$.
In the embedding, $T'$ will be inside of some subtriangulation and $J$ will be mapped to one of its outer vertices.
After four recursive steps of splitting $T$ with a Jordan separator, we might have a subtree with four marked vertices that need to be adjacent to a different vertex each, which has been embedded at a previous step.
However, every subtriangulation only has three outer vertices.
Therefore, we introduce an intermediate splitting step which follows from the fact that trees are median graphs.
\begin{lemma}[{\cite[Proposition 1.26]{imrich2000product}}]\label{lem:separate-3-vertices}
For every triple of vertices $a,b,c$ in a tree $T$ there exists a unique vertex $M$ in $T$ that lies on all three paths connecting $a$, $b$, and $c$ in $T$.
\end{lemma}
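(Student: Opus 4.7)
The plan is to exploit the defining property of a tree: between any two vertices there is a unique path. Write $P_{xy}$ for the unique $x$-$y$ path in $T$. The intuition is that the union $P_{ab}\cup P_{ac}\cup P_{bc}$ is a connected subgraph of $T$, hence itself a subtree, and it should look like a ``tripod'' with a single branching vertex, which will be the desired $M$. I will make this rigorous by two steps: existence (constructing $M$ explicitly as the closest point of $P_{ab}$ to $c$), and uniqueness (arguing that a second candidate forces a cycle).

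For existence, I would define $M$ to be a vertex on $P_{ab}$ minimizing the distance to $c$ in $T$. Such a vertex is well-defined and unique: if two distinct vertices on $P_{ab}$ both achieved the minimum, the two shortest $c$-to-them paths together with the portion of $P_{ab}$ between them would contain a cycle, contradicting the fact that $T$ is a tree. By construction $M$ lies on $P_{ab}$. For $P_{ac}$, concatenating a shortest $c$-$M$ path with the $M$-to-$a$ subpath of $P_{ab}$ yields an $a$-$c$ walk in $T$; since any walk contains a path and the $a$-$c$ path in a tree is unique, this walk must equal $P_{ac}$, so $M\in P_{ac}$. The identical argument applied with $b$ in place of $a$ gives $M\in P_{bc}$.

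For uniqueness, suppose a second vertex $M'\neq M$ lies on all three paths. Both $M$ and $M'$ lie on $P_{ab}$, which is linearly ordered from $a$ to $b$; without loss of generality $M$ lies strictly between $a$ and $M'$. Now $P_{ac}$ contains $a$, $M$, and $M'$; by uniqueness of paths in $T$, the $a$-to-$M'$ subpath of $P_{ac}$ must coincide with the $a$-to-$M'$ subpath of $P_{ab}$ and therefore passes through $M$. Next consider $P_{bc}$: starting from $b$ it runs along $P_{ab}$ toward $a$ and at some vertex $B$ must branch off toward $c$; by uniqueness the $B$-to-$c$ subpath of $P_{bc}$ equals the $B$-to-$c$ subpath of $P_{ac}$, which forces $B=M'$. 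Hence $P_{bc}$ leaves $P_{ab}$ exactly at $M'$ and contains no vertex strictly between $a$ and $M'$ on $P_{ab}$. In particular $M\notin P_{bc}$, contradicting the assumption that $M$ lies on all three paths.

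The main obstacle is the uniqueness part; existence is essentially immediate from the construction, but showing that no second median can hide elsewhere requires carefully tracking where each path enters and leaves $P_{ab}$ and invoking uniqueness of paths in a tree at the correct moments. A useful mental picture that guides (and shortens) this bookkeeping is that $P_{ab}\cup P_{ac}\cup P_{bc}$ is a subtree with three distinguished leaves $a,b,c$, and the argument above amounts to verifying that such a subtree has a single branching vertex.
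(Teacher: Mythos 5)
Your argument is correct in outline and takes a genuinely different route from the paper. The paper treats this as a standard fact and cites Imrich and Klav\v{z}ar; its own proof sketch establishes existence by noting that the three paths must share a vertex (otherwise suitable pieces of them concatenate to a cycle) and uniqueness by a pigeonhole argument: if $M\neq M'$ both lay on all three paths, two of the three endpoints $a,b,c$ would reach the same one of $M,M'$ first, contradicting the order of $M,M'$ on the unique path between those two endpoints. Your construction of $M$ as the vertex of $P_{ab}$ nearest to $c$ is more explicit, and your uniqueness argument via the linear order of $M,M'$ along the three paths also works. One step, however, is misjustified as written: from ``the concatenation of the $c$--$M$ geodesic with the $M$--$a$ subpath of $P_{ab}$ is a walk, and every walk contains a path'' you cannot conclude that the walk \emph{equals} $P_{ac}$ --- a walk only contains the unique $a$--$c$ path after deleting backtracking, and $M$ could in principle lie on a deleted portion. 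What you actually need is that the concatenation is already a path, i.e., that the $c$--$M$ geodesic meets $P_{ab}$ only in $M$; this follows at once from the minimality of $d(c,M)$ (any other common vertex would be strictly closer to $c$), so the repair is one sentence. Likewise, in the uniqueness part the assertion that $P_{bc}$ ``runs along $P_{ab}$ toward $a$ and branches off at some $B$'' silently uses that $P_{ab}\cap P_{bc}$ is a subpath containing $b$ --- true in a tree, but worth a word, and the deduction that $B=M'$ should be spelled out (it is cleanest to note that the orders forced on $P_{ac}$ and $P_{bc}$ make the unique $M'$--$c$ path both contain and avoid $M$). With these patches the proof is complete.
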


\begin{figure}[htb]
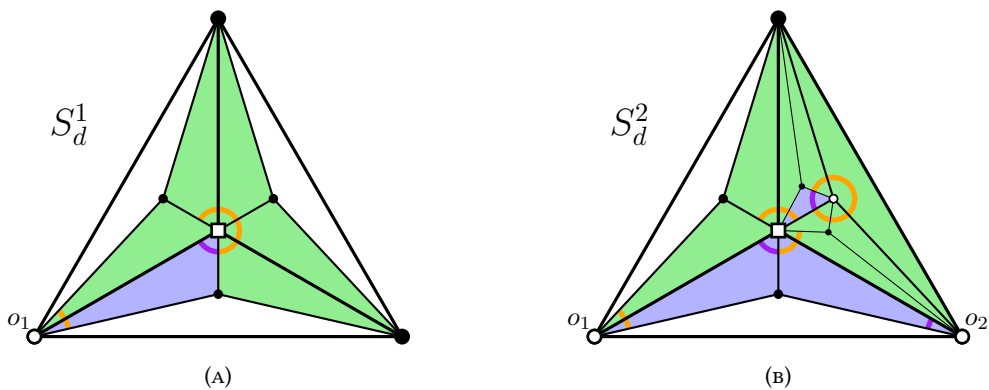

\centering
\subcaptionbox{\label{fig:stacked-S-1}}[.49\textwidth]{\includegraphics[page=3]{figures/stacked-new.pdf}}
\subcaptionbox{\label{fig:stacked-S-2}}[.49\textwidth]{\includegraphics[page=4]{figures/stacked-new.pdf}}
\caption{The stacked triangulations \textsc{(A)}~$S_d^1$ and \textsc{(B)}~$S_d^2$. Subtriangulations $S_{d-1}^1$ and $S_{d-1}^2$ are represented by green and blue triangles, respectively; white triangles are empty. The outer vertices $o_1$ (and $o_2$) of the subtriangulations are marked with orange (and purple) sectors.}
\label{fig:stacked-construction}
\end{figure}

Using \Cref{lem:separate-3-vertices}, we can split a subtree with three marked vertices at the \emph{marker separator} $M$. This ensures that after the splitting the number of marked vertices in each subtree is again at most two.
We now define two stacked triangulations $S_d^1$ and $S_d^2$ such that any tree on at most $2^d$ vertices and with at most $i$ marked vertices $m_j$ can be embedded into the interior of $S_d^i$ such that $m_j$ is adjacent to $o_j(S_d^i)$, for $i=1,2$.
Instead of using the uniformly stacked triangulation, we recursively build our triangulation out of two types of triangulation $S_{d-1}^1$ and $S_{d-1}^2$. 
As the base case, both $S_0^1$ and $S_0^2$ are equal to a triangle with one vertex stacked inside, that is, the uniformly stacked triangulation~$S_1$.
\Cref{fig:stacked-construction} gives an illustration of the constructions. 
Triangulation $S_d^1$ consists of one copy of $S_{d-1}^2$ between the outer vertex $o_1(S_d^1)$ and the center vertex $c(S_d^1)$, and five copies of $S_{d-1}^1$, four of which have their outer vertex $o_1$ at the center vertex $c(S_d^1)$.
In $S_d^2$ there is one copy of $S_{d-1}^2$ between $o_1(S_d^2)$ and the center vertex $c(S_d^2)$, another between $c(S_d^2)$ and $o_2(S_d^2)$, and a third between $c(S_d^2)$ and a vertex in the interior of $S_d^2$, which we call the \emph{center-right} vertex.
Furthermore, there are eight copies of $S_{d-1}^1$, for three of them their outer vertex $o_1$ coincides with the center vertex $c(S_d^2)$ and for four of them it coincides with the center-right vertex.

Letting $a_i(d)$ be the number of interior vertices of $S_d^i$, we get the system of recurrences
\begin{align*}
a_1(d) &= 5 a_1(d-1) + a_2(d-1) + 4\\
a_2(d) &= 8 a_1(d-1) + 3 a_2(d-1) + 6.
\end{align*}
Solving this system tells us that $S_d^i$ has $O(7^d)$ many vertices.

Another important part of the constructions is that there is one copy of $S_{d-1}^1$ with $o_1(S_d^i)=o_1(S_{d-1}^1)$, for $i=1,2$. 
Hence, $o_1(S_d^i)$ coincides with $o_1$'s of two smaller copies, $S_{d-1}^1$ and  $S_{d-1}^2$. 
Together with the next lemma, this ensures that we can embed multiple trees of small size into one copy of $S_d^i$. We embed the tree inductively by splitting $T$ along the Jordan separator and splitting $S_d^i$ into appropriately sized regions such that the subtrees can be embedded in their assigned regions by induction. The next lemma tells us how we can split $S_d^i$ into these regions.

We say that two embedded graphs are internally disjoint if their common vertices are on the outer face of both graphs. Let $\mathcal{S}_d=\{S_d^1,S_d^2\}$.
\begin{lemma}
\label{lem:embedding_lemma}
Let $\ell,d,d_1,\ldots,d_k$ be nonnegative integers with $d_j \leq d-1$ for all $j=1,\ldots,k$ and
\begin{equation*}
\sum_{j=1}^k2^{d_j}\le 2^{d+\ell}.
\end{equation*}
Then $2^\ell$ internally disjoint triangulations which are each in $\mathcal{S}_d$ with common outer vertex $o_1$ contain pairwise internally disjoint copies $S_{d_j}^{i_j}$ for $i_j \in \{1,2\}$ and $j\in [k]$ with outer vertex at $o_1$. 
\end{lemma}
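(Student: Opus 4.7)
The plan is to induct on $d$, with the claim for all $\ell$ handled simultaneously. The base case $d=0$ is vacuous: the constraint $0 \le d_j \le d-1 = -1$ forces $k=0$, so there is nothing to place.

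For the inductive step, the structural input is the construction itself: each of $S_d^1$ and $S_d^2$ contains a copy of $S_{d-1}^1$ and a copy of $S_{d-1}^2$, both sharing the outer vertex $o_1$ with the parent triangulation and internally disjoint from one another. Consequently, the $2^\ell$ given triangulations host $2^{\ell+1}$ pairwise internally disjoint copies of $S_{d-1}$ at the common vertex $o_1$, with the type freely choosable at each. I would first split the indices into a \emph{large} set $A = \{j : d_j = d-1\}$ and a \emph{small} set $B = [k] \setminus A$. The budget inequality gives $|A|\cdot 2^{d-1} \le \sum_j 2^{d_j} \le 2^{d+\ell}$, hence $|A| \le 2^{\ell+1}$. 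Place each $j \in A$ in its own $S_{d-1}$-slot, choosing $i_j$ to match the type of that slot; this uses up $|A|$ of the $2^{\ell+1}$ slots.

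Let $m = 2^{\ell+1} - |A|$ be the number of $S_{d-1}$-slots that remain. The remaining budget is
\begin{equation*}
\sum_{j \in B} 2^{d_j} \;\le\; 2^{d+\ell} - |A|\cdot 2^{d-1} \;=\; m\cdot 2^{d-1},
\end{equation*}
and each small item has size $2^{d_j} \le 2^{d-2}$. To invoke the inductive hypothesis, write $m$ in binary as $m = 2^{b_1}+\cdots+2^{b_r}$ with $b_1 > \cdots > b_r \ge 0$, partition the $m$ remaining slots into $r$ blocks of sizes $2^{b_t}$, and distribute the small items into $r$ corresponding groups so that group $t$ receives items summing to at most $2^{d-1+b_t}$. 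Each block then satisfies the hypotheses of the lemma with parameters $d' = d-1$ and $\ell' = b_t$ (indeed $d_j \le d'-1$ and $\sum 2^{d_j} \le 2^{d'+\ell'}$), and the induction hypothesis finishes the packing.

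I expect the main obstacle to be the distribution step in the previous paragraph, since $m$ is not a power of two in general and the recursion cleanly wants $2^{\ell'}$ slots. This reduces to a standard ``power-of-two bin packing'' statement: items whose sizes are powers of $2$, each at most $2^{d-2}$, with total at most $m\cdot 2^{d-1}$, can always be partitioned among groups of capacities $2^{d-1+b_t}$ (whose capacities themselves sum to $m\cdot 2^{d-1}$). I would prove this by a short greedy argument processing items in decreasing size and filling the largest-capacity remaining group first, exploiting the fact that both item sizes and group capacities are powers of $2$ to show that the greedy choice never gets stuck. With this packing lemma in hand, the inductive step closes.
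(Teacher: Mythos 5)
Your proof is correct and is essentially the paper's argument: the paper also views the nested copies of $\mathcal{S}_{d-1},\mathcal{S}_{d-2},\dots$ sharing $o_1$ as a power-of-two budget of $2^{d+\ell}$ and assigns the $S_{d_j}^{i_j}$ greedily in descending order of $d_j$. Your inductive packaging with the explicit power-of-two bin-packing step (remaining capacities are multiples of the current item size, so a stuck greedy would mean the budget is already exhausted) supplies the justification that the paper compresses into the remark that descending order avoids ``a small one blocking a large one,'' so it is a more rigorous rendering of the same idea rather than a different route.
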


\begin{proof}
Since each copy of $S_d^i$ contains internally disjoint copies of $S_{d-1}^1,S_{d-1}^2$ with the same outer vertex $o_1$, they contain exactly $2^{j}$ pairwise internally disjointed copies of graphs in $\mathcal{S}_{d-j}$ each, all having the same outer vertex $o_1$, for every $j=1,\ldots,d$. 
Therefore it is natural to give a copy of a graph in $\mathcal{S}_{d_i}$ a cost of $2^{d_i}$ and say that we have a budget of $2^{d+1}$ in total. 
We greedily assign for $i=1,\ldots,k$ a copy of a graph in $\mathcal{S}_{d_i}$, going over the $d_i$ in order of descending size. This is to avoid a small one blocking a large one we need later. Then we have a cost of
\begin{equation*}
\sum_{i=1}^k 2^{d_i}\le 2^{d+1}
\end{equation*}
and hence we do not exceed the budget.
\end{proof}

We are now ready to prove a slightly weaker version of \Cref{thm:polynomial_bound}.

\begin{theorem}\label{thm:embed-to-triangulation}
Let $T$ be a tree on $n \leq 2^d$ vertices for $d\in\N$, with up to two, not necessarily distinct, marked vertices $m_1$ and $m_2$.
Then $T$ can be embedded into the interior of $S_d^1$ such that $m_1$ is adjacent to $o_1$ of $S_d^1$ and $T$ can be embedded into the interior of $S_d^2$ such that $m_1$ and $m_2$ are adjacent to $o_1$ and $o_2$ of $S_d^2$, respectively.
\end{theorem}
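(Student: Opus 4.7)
I proceed by strong induction on $d$. The base case $d=0$ is immediate: both $S_0^1$ and $S_0^2$ coincide with $S_1$, whose single interior vertex is adjacent to all three outer vertices, so any tree on at most one vertex embeds trivially with the marker(s) placed at that vertex.

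For the inductive step in the $S_d^1$ case with a single marker $m_1$, I would apply \Cref{lem:good root} to obtain a Jordan separator $J$ of $T$ and place $J$ at $c(S_d^1)$. The components $T_0, T_1, \dots, T_k$ of $T-J$ each have size at most $n/2\le 2^{d-1}$, and every $T_j$ inherits a fresh marker $r_j$ (its vertex adjacent to $J$) that must land adjacent to $c(S_d^1)$. The unique component $T_0$ containing $m_1$ then has exactly two markers ($r_0$ and $m_1$); by induction it embeds into some $S_{d_0}^2$ whose outer vertices coincide with $c(S_d^1)$ and $o_1(S_d^1)$, and such a sub-triangulation is guaranteed to exist inside the $S_{d-1}^2$ slot of $S_d^1$ that sits precisely between those two vertices. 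Each remaining single-marker $T_j$ embeds inductively into a copy of $S_{d_j}^{i_j}$ with $o_1$ at $c(S_d^1)$, drawn from the four $S_{d-1}^1$'s meeting at $c(S_d^1)$. Setting $d_j = \lceil \log_2 |T_j|\rceil$ gives $\sum_j 2^{d_j} < 2n \le 2^{d+1}$, which fits the budget in \Cref{lem:embedding_lemma} applied with $\ell = 2$.

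For the $S_d^2$ case with two markers $m_1, m_2$, I would pick a Jordan separator $J$ and form the median $M$ of $\{m_1, m_2, J\}$ via \Cref{lem:separate-3-vertices}, then place $M$ at $c(S_d^2)$. Because $M$ lies on the $m_1$--$m_2$ path, the two markers fall into distinct components $A_1, A_2$ of $T-M$, and each such $A_i$ carries exactly two markers (its own root and $m_i$); by induction $A_i$ embeds into the copy of $S_{d-1}^2$ lying between $c(S_d^2)$ and $o_i(S_d^2)$. The remaining components of $T-M$ each have a single root marker and slot into the third $S_{d-1}^2$ (between $c(S_d^2)$ and the center-right vertex) and the three $S_{d-1}^1$'s sharing $o_1$ at $c(S_d^2)$, assembled again by \Cref{lem:embedding_lemma}.

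The main obstacle I anticipate is the size bound for the component $A_J$ of $T-M$ containing $J$ when $M\neq J$. Although $A_1$ and $A_2$ are confined to the Jordan component $C$ of $T-J$ through $M$ and hence have size at most $|C|-1\le n/2-1$, the component $A_J$ may a priori exceed $n/2$, so no single depth-$(d-1)$ sub-triangulation can host it. I would resolve this by a secondary Jordan split inside $A_J$: place the neighbor of $M$ in $A_J$ at the center-right vertex of $S_d^2$ (which is adjacent to $c(S_d^2)$ via the third $S_{d-1}^2$), then embed the pieces of the resulting secondary decomposition into the four $S_{d-1}^1$'s meeting at the center-right vertex. The aggregate cost still satisfies $\sum 2^{d_j} < 2n \le 2^{d+1}$, so the packing step of \Cref{lem:embedding_lemma} goes through; the asymmetric design of $S_d^2$ with three $S_{d-1}^2$'s and the extra hub of four $S_{d-1}^1$'s at the center-right vertex seems engineered precisely to accommodate this two-step split, and verifying the arithmetic cleanly is the crux of the argument.
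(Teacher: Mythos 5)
Your base case, your single-marker case for $S_d^1$, and the situation where $m_1$ and $m_2$ fall into different components of $T-J$ (equivalently, where your median $M$ equals $J$) all line up with the paper's argument. The genuine gap is in the two-marker case when $M\neq J$, and it is precisely the obstacle you flag yourself: the component $A_J$ of $T-M$ containing $J$ can have up to $n-1$ vertices, and your proposed resolution does not remove it. Placing the neighbour $u$ of $M$ in $A_J$ at the center-right vertex $c_r$ and distributing the components of $A_J-u$ among the four copies of $S_{d-1}^1$ at $c_r$ fails for the same reason the original step failed: $u$ is merely the first vertex on the $M$--$J$ path, not a balanced separator of $A_J$, so the component of $A_J-u$ containing $J$ can still have up to $n-2$ vertices and fits into no depth-$(d-1)$ subtriangulation. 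Calling this a ``secondary Jordan split'' does not make it one. If you instead place an actual Jordan separator $J'$ of $A_J$ at $c_r$, then $J'$ is in general not adjacent to $M$, so the edge $cc_r$ is not what realizes the connection; rather, the component of $A_J-J'$ containing $u$ acquires two markers ($u$ must be adjacent to $c$ and its root to $c_r$) and must be routed through the copy of $S_{d-1}^2$ between $c$ and $c_r$ --- a copy you have already assigned to ``the remaining components of $T-M$'' in the preceding step. As written, the two-marker case is not proved.

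The paper avoids the problem by performing the two splits in the opposite order. It first splits $T$ at its Jordan separator $J$ (\Cref{lem:good root}), so that every component $T_j$ of $T-J$ has at most $n/2\le 2^{d-1}$ vertices; only the component $T_1$ containing both $m_1$ and $m_2$ is split a second time, at the median $M$ of $\{m_1,m_2,r_1\}$ taken inside $T_1$ via \Cref{lem:separate-3-vertices}, where $r_1$ is the neighbour of $J$ in $T_1$. Since every piece of $T_1-M$ lies inside $T_1$, every piece of the combined decomposition automatically has at most $n/2$ vertices, and no oversized component ever arises. The paper then maps $M$ to $c$ and $J$ to $c_r$: the pieces containing $m_1$, $m_2$, and $r_1$ go into the three copies of $S_{d-1}^2$ (between $c$ and $o_1$, $o_2$, $c_r$, respectively), the remaining pieces of $T_1-M$ go to the copies of $S_{d-1}^1$ at $c$, and $T_2,\ldots,T_k$ go to the copies of $S_{d-1}^1$ at $c_r$, with \Cref{lem:embedding_lemma} supplying the packing. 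Reorganizing your argument in this order --- Jordan separator first, median second and only inside the already-small piece --- closes the gap.
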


\begin{proof}
We give a proof by induction on the depth $d$, simultaneously for the stacked triangulations $S_d^1$ and~$S_d^2$.
For $d=0$, the only tree on at most one vertex is mapped to the center vertex of $S_0^1 = S_0^2 = S_1$, which is adjacent to all outer vertices.

For the inductive step, let $J$ be a Jordan separator of $T$, which exists by \Cref{lem:good root}, and let $T_1,\ldots,T_k$ be the connected components of $T - J$, which are subtrees of~$T$.
Furthermore, let $d_j=\lceil\log_2(|T_j|)\rceil$ for $j=1,\ldots,k$.
Then $d_j$ is the smallest integer such that $|T_j| \leq 2^{d_j}$ and we have $d_j \leq d-1$, since $J$ is a Jordan separator.
In each subtree $T_j$, we mark the vertex $r_j$ adjacent to~$J$.
We distinguish three cases.

\begin{figure}[htb]
\centering
\subcaptionbox{\label{fig:split-1-marked}}[.49\textwidth]{\includegraphics[page=1]{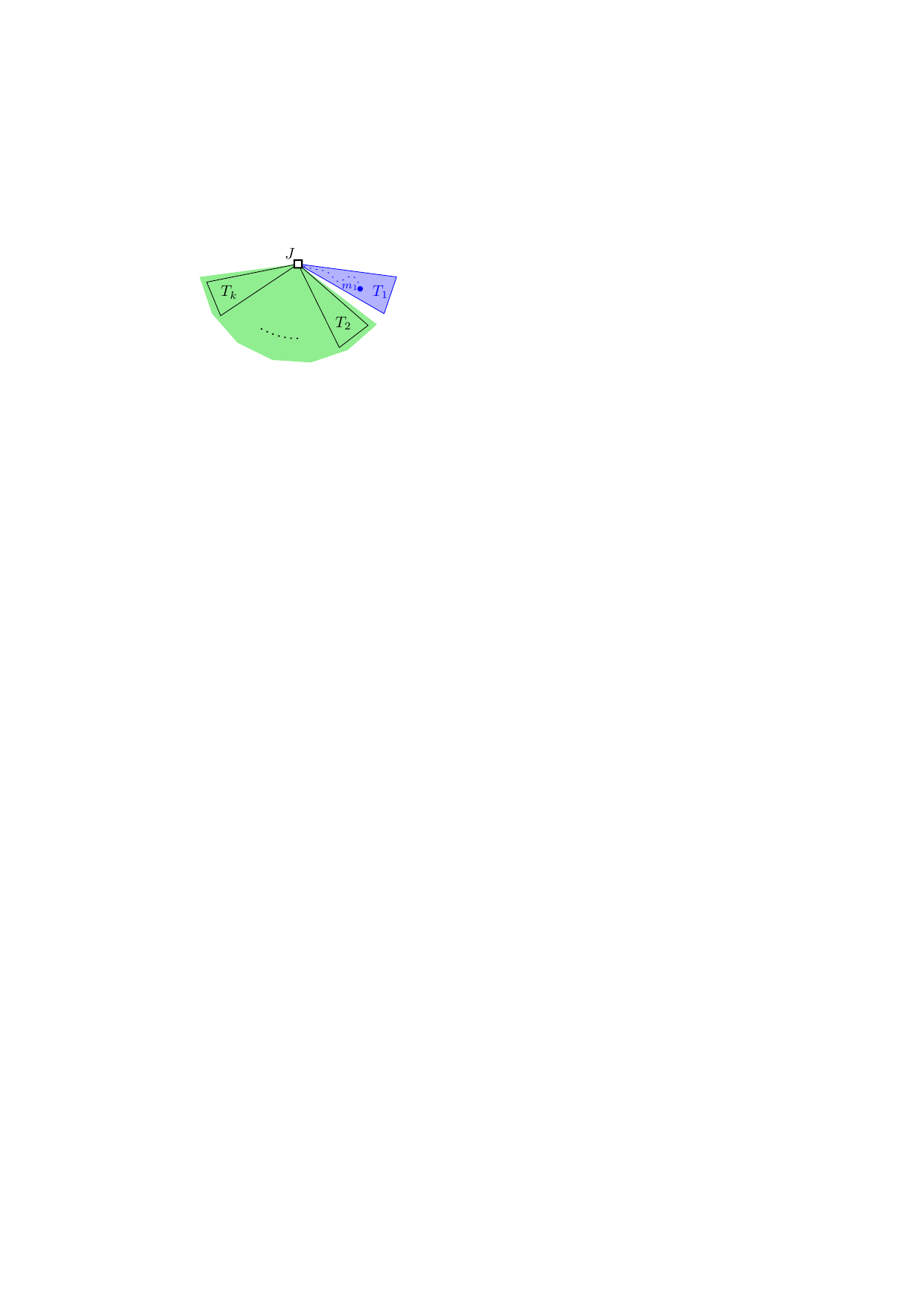}}
\subcaptionbox{\label{fig:embed-1-marked}}[.49\textwidth]{\includegraphics[page=5]{figures/stacked-new.pdf}}
\caption{\textsc{(A)}~A splitting into subtrees with only one marked vertex $m_1$ in $T$. \textsc{(B)}~Embedding the subtrees into $S_d^1$.}
\label{fig:1-marked}
\end{figure}

\begin{description}[leftmargin=0em]

\item[Case 1: $T$ contains at most one marked vertex~$m_1$] 
We map $J$ to the center vertex~$c$ of~$S_d^i$, for $i \in \{ 1, 2 \}$.

If $m_1=J$ or there is no marked vertex, then the required adjacency is fulfilled.
Otherwise we assume, without loss of generality, that $m_1 \in T_1$.
In both $S_d^1$ and $S_d^2$ there are at least four copies of $S_{d-1}^1$ incident to $c$ with their outer vertex $o_1$ equal to~$c$.
We can apply \Cref{lem:embedding_lemma} since the sizes of the subtrees are bounded by $2^{d-1}$ as $J$ is a Jordan separator and 
\begin{equation*}
\sum_{j=2}^k2^{d_j}\le\sum_{j=2}^k2|T_j|\le 2^{d+1}.
\end{equation*}

By the induction hypothesis, we can simultaneously embed the trees $T_2,\ldots,T_k$ into the interior of those copies of $S_{d-1}^1$ such that $r_2,\ldots,r_k$ are all adjacent to~$J$. 
Furthermore, by the induction hypothesis, we can embed $T_1$ into the copy of $S_{d-1}^2$ such that $r_1$ is adjacent to $J$ and $m_1$ is adjacent to $o_1$, as claimed.
\Cref{fig:1-marked} illustrates the case for~$S_d^1$, and as explained above, the case for $S_d^2$ is similar.\\

\item[Case 2: $m_1$ and $m_2$ are not in the same subtree $T_j$]
Then every subtree $T_1, \ldots, T_k$ contains at most two marked vertices.
We again map $J$ to the center vertex $c$ of $S_d^2$.
If $m_1=J$ or $m_2=J$, then similarly as in Case 1 the required adjacencies are fulfilled. 
Hence we assume, without loss of generality, that $m_1\in T_1$ and $m_2\in T_2$.
As $d_1,d_2\le d-1$ we can embed $T_1$ and $T_2$ into the copies of $S_{d-1}^2$ marked blue and red in \Cref{fig:embed-2-marked} such that $m_1$ and $m_2$ are adjacent to $o_1$ and $o_2$, respectively, and both $r_1$ and $r_2$ are adjacent to $J$.
Furthermore, we can embed the remaining subtrees into the copies of $S_{d_j}^i$ adjacent to $J$ such that the marked vertices $r_3,\ldots,r_k$ are adjacent to $J$. This is again an application of \Cref{lem:embedding_lemma}, which we can apply because the sizes of the subtrees are indeed bounded by $2^{d-1}$ as $J$ is a Jordan separator and we have

\begin{equation*}
\sum_{j=3}^k2^{d_j}\le\sum_{j=3}^k2|T_j|\le 2^{d+1}.
\end{equation*}

\begin{figure}[hbt]
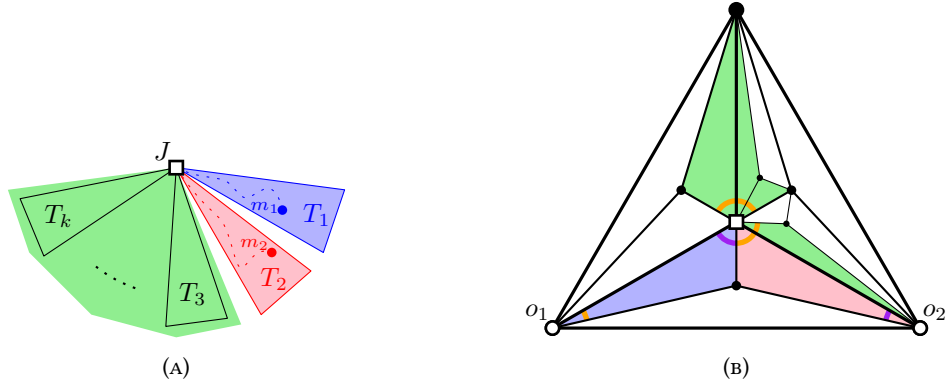

\centering
\subcaptionbox{\label{fig:split-2-marked}}[.49\textwidth]{\includegraphics[page=2]{figures/splitting-trees.pdf}}
\subcaptionbox{\label{fig:embed-2-marked}}[.49\textwidth]{\includegraphics[page=6]{figures/stacked-new.pdf}}
\caption{\textsc{(A)}~A splitting of $T$ with Jordan separator such that $m_1$ and $m_2$ are in different subtrees. \textsc{(B)}~Embedding the subtrees into $S_d^2$.}
\label{fig:2-marked}
\end{figure}

\begin{figure}[htb]
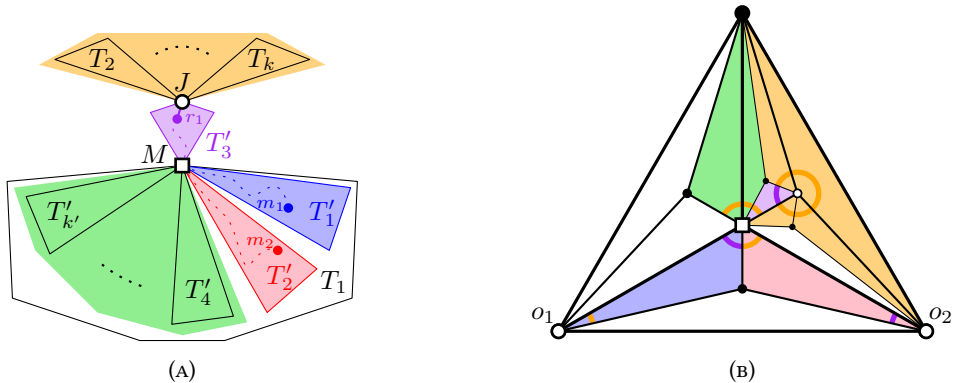

\centering
\subcaptionbox{\label{fig:split-3-marked}}[.49\textwidth]{\includegraphics[page=3]{figures/splitting-trees.pdf}}
\subcaptionbox{\label{fig:embed-3-marked}}[.49\textwidth]{\includegraphics[page=7]{figures/stacked-new.pdf}}
\caption{\textsc{(A)}~A splitting of $T$ with a Jordan separator $J$ such that $m_1$ and $m_2$ are in the same subtree $T_1$, which is further split at~$M$. \textsc{(B)}~Embedding the subtrees into $S_d^2$.}
\label{fig:3-marked}
\end{figure}

\item[Case 3: $m_1$ and $m_2$ are in the same subtree $T_1$]
Then $T_1$ contains three marked vertices $m_1$, $m_2$, and $r_1$.
Hence, using \Cref{lem:separate-3-vertices} and introducing the vertex $M$, we further split $T_1$ into subtrees $T'_j$, for $j=1,\ldots,k'$, and let $d'_j=\lceil\log_2(|T'_j|)\rceil$.
This time we map $M$ to the center vertex $c$ and $J$ to the center-right vertex~$c_r$.
We embed the subtrees $T_2,\ldots T_k$ into the four copies of $S_{d-1}^1$ incident to~$c_r$, which is possible by \Cref{lem:embedding_lemma}.
Further, we assume that $m_1$, $m_2$, and $r_1$ are contained in $T'_{1}$, $T'_{2}$, and $T'_{3}$, respectively.
If any of them coincide with $M$, then the corresponding subtree is empty and the required adjacencies are fulfilled by~$M$.
In each subtree, we mark the vertex $r'_j$ that is the neighbor of $M$ in that subtree.
Since $\sum 2^{d'_j} \leq 2^d$, we can embed the subtrees $T'_4,\ldots,T'_{k'}$ into two copies of $S_{d-1}^1$ incident to $c$ by \Cref{lem:embedding_lemma} and the induction hypothesis.
The following embeddings are ensured by the induction hypothesis:
\begin{itemize}
\item $T'_1$ is embedded into the copy of $S_{d-1}^2$ between $o_1$ and $c$.
\item $T'_2$ is embedded into the copy of $S_{d-1}^2$ between $c$ and $o_2$.
\item $T'_3$ is embedded into the copy of $S_{d-1}^2$ between $c$ and $c_r$.\qedhere
\end{itemize}
\end{description}

\end{proof}

Given a tree, we can mark any vertex, and then use Theorem~\ref{thm:embed-to-triangulation}.
Since we can embed every tree on $n\le 2^d$ vertices into $S_d^i$, this is a planar graph on $O(n^{\log_2(7)})$ vertices that contains all $n$-vertex trees as subgraph.
To reach the statement of \Cref{thm:polynomial_bound}, it remains to make the construction outerplanar and slightly reduce the number of vertices in it.

\UniversalOuterplanar*

\begin{proof}[Proof sketch.]
We build on the constructions $S_d^1$ and $S_d^2$ from \Cref{thm:embed-to-triangulation}.

First note that we embed all subtrees into the interior of those stacked triangulations.
Hence, we only need the outer vertices $o_1$ and $o_2$, the center vertex $c$, and the center-right vertex $c_r$.
Further, we need the edges between $c$ and the other three mentioned vertices.
Removing all other vertices and edges from $S_d^1$ and $S_d^2$ results in outerplanar graphs that still fulfill all properties needed for the embedding; see \Cref{fig:outerplanar} for an illustration.

\begin{figure}[htb]
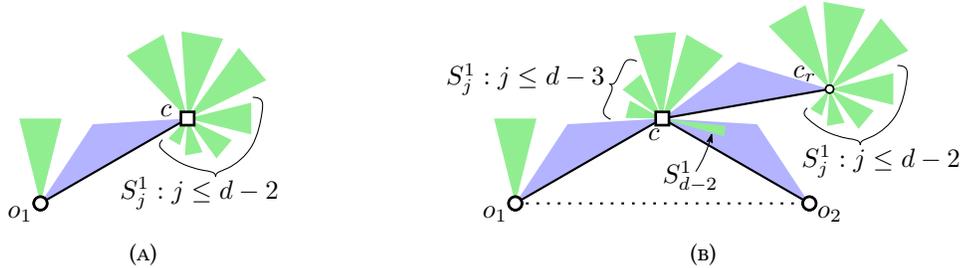

\centering
\subcaptionbox{\label{fig:outerplanar-S1}}[.49\textwidth]{\includegraphics[page=8]{figures/stacked-new.pdf}}
\subcaptionbox{\label{fig:outerplanar-S2}}[.49\textwidth]{\includegraphics[page=9]{figures/stacked-new.pdf}}
\caption{The outerplanar constructions corresponding to \textsc{(A)}~$S_d^1$ and \textsc{(B)}~$S_d^2$.}
\label{fig:outerplanar}
\end{figure}

Second, every time that we use \Cref{lem:embedding_lemma} to embed multiple subtrees incident to one vertex, the sizes of the needed subtriangulations actually sum up to at most $2^{d+1}-1$, because at least the separator vertex $J$ (or $M$) is not part of any subtree.
Therefore, we can replace one of the copies of $S_{d-1}^1$ incident to $c$ and $c_r$, each, by one copy of $S_j^1$ for every $j \leq d-2$.
Furthermore, within $S_d^2$ in the copy of $S_{d-1}^2$ between $c$ and $o_2$ there is a copy of $S_{d-2}^1$ incident to $c$ that is unused in all cases.
Hence, we only need to add copies of $S_j^1$ up to $j = d-3$ there.

Implementing these changes, we end up with the system of recurrences
\begin{align*}
a_1(d) &= 4 a_1(d-1) + a_2(d-1) + 1 + \sum_{j=0}^{d-2} a_1(j)\\
a_2(d) &= 6 a_1(d-1) + 3 a_2(d-1) + 2 + a_1(d-2) + 2 \sum_{j=0}^{d-3} a_1(j).
\end{align*}
Solving this system yields the asymptotic bound of $O((3+\sqrt{10})^d) = O(n^{\log_2(3+\sqrt{10})})$ as claimed.
\end{proof}

\section{Embedding three trees into a planar graph}
\label{sec:3_trees}

In this section we focus on the converse of the Nash-Williams' tree-covering theorem: given any three trees on $n$ vertices we consider the smallest number of vertices of a planar graph containing them as subgraph. Recall the following definitions.

Let $n,m$ be positive integers and denote $[n] = \{1,\ldots,n\}$. 
A \emph{leaf} of a tree is a vertex of degree 1. A \emph{caterpillar} is a tree such that the subgraph induced on all vertices of degree two or more is a path. This subgraph is called the \emph{spine} of the caterpillar. Let $n, k \in \mathbb{N}$ and $k \mid n$. A \emph{$k$-caterpillar on $n$ vertices} $C_{n,k}$ is a caterpillar with spine $P_k$ such that every vertex on the spine is adjacent to $ \frac{n}{k}-1$ leaves. In particular this means that $|V(C_{n,k})| = n$ and that if $u \in V(C_{n,k})$ is not a leaf, then $|N(u)| \in \{\frac{n}{k}, \frac{n}{k}+1\}$ for $k >1$. A \emph{star} is $C_{n, 1}$ for some $n\geq 2$ which is isomorphic to $K_{1,n-1}$. Similarly, $C_{n, 2}$ is also called a \emph{double star}.

\begin{theorem}
    \label{thm:counterexample}
    If $n \geq 6 k^2$, then there is no planar graph on $n$ vertices containing $C_{n,1}$, $C_{n, 2}$, and $C_{n, k}$, $k\geq 5$, as subgraphs.
\end{theorem}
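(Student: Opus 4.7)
The plan is to exploit the forced universal vertex from $C_{n,1}$ together with a common-neighbor bound for outerplanar graphs.

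\textbf{Setup.} Since $G \supseteq C_{n,1} = K_{1,n-1}$, there is a vertex $v_1 \in V(G)$ of degree $n-1$. Placing $v_1$ in the outer face of a planar embedding of $G$ and deleting $v_1$ shows that $H := G - v_1$ is outerplanar on $n-1$ vertices. The key structural consequence I will use repeatedly is that any two distinct vertices of an outerplanar graph share at most two common neighbors, since three common neighbors would produce a $K_{2,3}$ subgraph, which is forbidden in outerplanar graphs.

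\textbf{Counting argument.} Let $u, w$ be the spine of the $C_{n,2}$ copy and $L_u, L_w$ the corresponding leaf sets, each of size $\frac{n}{2}-1$ and partitioning $V(G) \setminus \{u,w\}$. After possibly swapping $u$ and $w$, I may assume $v_1 \notin L_w$; this is always achievable because $v_1$ lies in $\{u,w\} \cup L_u \cup L_w$ and swapping corresponds to relabelling. Let $s_1, \ldots, s_k$ be the spine of $C_{n,k}$ and set
\[
T_r := \bigl\{\text{leaves of } s_r \text{ in } C_{n,k}\bigr\} \cap L_w.
\]
Every $\ell \in T_r$ is adjacent to $s_r$ via $C_{n,k}$, to $w$ via $C_{n,2}$, and $\ell \neq v_1$ by choice of $w$, so $\ell$ is a common neighbor of $s_r$ and $w$ in the outerplanar graph $H$. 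When $s_r \neq w$ this forces $|T_r| \leq 2$.

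\textbf{Contradiction.} The $T_r$ partition the set of $C_{n,k}$-leaves lying in $L_w$, and at most $k$ vertices of $L_w$ are spine vertices of $C_{n,k}$, so $\sum_r |T_r| \geq |L_w| - k = \frac{n}{2} - 1 - k$. I then split into two cases. If $w$ is not a spine vertex of $C_{n,k}$, then $|T_r| \leq 2$ for all $r$ and $\frac{n}{2}-1-k \leq 2k$, giving $n \leq 6k + 2$. If $w = s_j$ for some $j$, only the trivial upper bound $|T_j| \leq \frac{n}{k} - 1$ is available at $r = j$ while $|T_r| \leq 2$ for the remaining indices; this yields $\frac{n}{2}-1-k \leq \frac{n}{k} + 2k - 3$, i.e.\ $n \leq \frac{2k(3k-2)}{k-2}$. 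For $k \geq 5$ both resulting upper bounds are strictly below $6k^2$, contradicting the hypothesis $n \geq 6k^2$.

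\textbf{Main obstacle.} The delicate step is the sub-case $w = s_j$, where the common-neighbor inequality is unavailable at index $j$ (since $s_r = w$) and one must fall back on the trivial bound $|T_j| \leq \frac{n}{k}-1$. The resulting constraint of the form $n \lesssim \frac{6k^2}{k-2}$ is precisely what makes the threshold $k \geq 5$ enter naturally. A secondary bookkeeping step is verifying the WLOG reduction to $v_1 \notin L_w$ in each of the possible configurations of where $v_1$ lies ($v_1 \in \{u,w\}$, or $v_1 \in L_u \cup L_w$).
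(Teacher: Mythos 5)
Your strategy is in essence the same as the paper's: locate two vertices with roughly $n/2$ common neighbours (for you, the star centre $v_1$ together with a spine vertex $w$ of $C_{n,2}$; in the paper, the star centre is first merged with one spine vertex of $C_{n,2}$ and paired with the other), and then use $K_{3,3}$-freeness --- in your phrasing, $K_{2,3}$-freeness of the outerplanar graph $H=G-v_1$, which is the same obstruction seen through the apex $v_1$ --- to cap at two the number of these common neighbours that each spine vertex of $C_{n,k}$ can claim as leaves. The two countings (yours inside $L_w$, the paper's outside $N(x)\cap N(y)$) are dual versions of the same estimate.

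There is, however, a genuine gap in your case analysis: you never consider the possibility that $v_1$ is itself one of the spine vertices $s_1,\dots,s_k$ of $C_{n,k}$. If $s_{r_0}=v_1$ for some $r_0$, then $s_{r_0}\notin V(H)$, so your common-neighbour bound in $H$ says nothing about $|T_{r_0}|$; nor can you recover a bound inside $G$, since two vertices of a planar graph may share arbitrarily many common neighbours (a bipyramid), so a priori $|T_{r_0}|$ can be as large as $\frac{n}{k}-1$. (A smaller issue of the same kind: your WLOG only asserts $v_1\notin L_w$, which still permits $v_1=w$ and hence $w\notin V(H)$; but the same swap in fact yields $v_1\in\{u\}\cup L_u$, so this one is harmless.) The missing case is the one that matters: when both $w$ and $v_1$ are spine vertices of $C_{n,k}$, two indices receive only the trivial bound $\frac{n}{k}-1$ and the inequality becomes $\frac{n}{2}-1-k\le 2\left(\frac{n}{k}-1\right)+2(k-2)$, i.e.\ $n\cdot\frac{k-4}{2k}\le 3k-5$. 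This still contradicts $n\ge 6k^2$ for $k\ge 5$ because $3k^2-15k+5>0$ there, so your proof is repairable --- but note that this is precisely where the hypothesis $k\ge 5$ is actually used. The two cases you do write down already produce a contradiction for every $k\ge 3$, which should have been a warning sign that a configuration was being overlooked. The paper avoids the issue by first arguing that the star centre may be assumed to coincide with a spine vertex of $C_{n,2}$, so that only the two vertices $x,y$ ever need to be excluded from the count over the spine of $C_{n,k}$.
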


\begin{proof}
Let $G$ be a subgraph-universal planar graph for $\{C_{n,1},C_{n, 2},C_{n, k}\}$ on $n$ vertices. There are only two ways (up to isomorphism) in which $C_{n,1}$ and $C_{n,2}$ can be contained in $G$, either the center of the star $C_{n,1}$ is mapped to the same vertex as one of the two spine vertices of $C_{n,2}$ or to a leaf vertex of $C_{n,2}$; see Figure \ref{fig:glueing}. The graph in the latter case is strictly contained in the first, so we can assume that the center of the star $C_{n,1}$ is mapped to the same vertex as one of the two spine vertices of $C_{n,2}$. Let $x$ be the shared spine vertex between $C_{n,1}$ and $C_{n,2}$ and let $y$ be the other spine vertex of $C_{n,2}$. Then we have $|N(x)\cap N(y)|=\frac{n-2}{2}$.

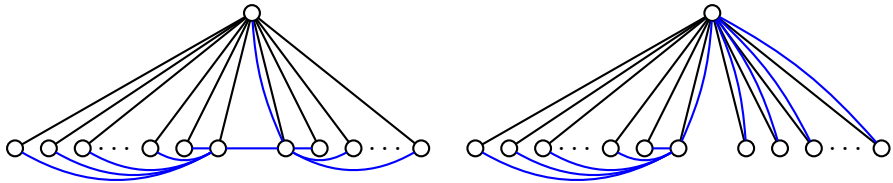
\begin{figure}[h!]
\centering
  \begin{tikzpicture}[thick, scale=.45]
    \tikzstyle{uStyle}=[shape = circle, minimum size = 6.0pt, inner sep = 0pt,
    outer sep = 0pt, draw, fill=white]
    \tikzstyle{lStyle}=[shape = rectangle, minimum size = 20.0pt, inner sep = 0pt,
outer sep = 2pt, draw=none, fill=none]
    \tikzset{every node/.style=uStyle}
    \begin{scope}
   \draw (8,3) node (v0){};
    \foreach \i in {1,...,3,5,6,7,9,10,11,13}
    \draw (\i,-1) node (v\i) {};
    \node[shape = circle, minimum size = 6.0pt, inner sep = 0pt,
    outer sep = 0pt, draw = none] at (4, -1) {$\dots$};
    \node[shape = circle, minimum size = 6.0pt, inner sep = 0pt,
    outer sep = 0pt, draw = none] at (12, -1) {$\dots$};
    \foreach \i in {1,...,3,5,6,7,9,10,11,13}
    \draw (v0) edge (v\i);
    \draw[blue] (v7) edge (v9);
    \draw[blue] (v7) edge (v6);
    \draw[blue] (v10) edge (v9);
    \draw[blue,bend left] (v7) edge (v5);
    \draw[blue,bend left] (v11) edge (v9);  
     \draw[blue,bend left] (v7) edge (v3);
    \draw[blue,bend left] (v13) edge (v9);  
     \draw[blue,bend left] (v7) edge (v2);
     \draw[blue,bend left] (v7) edge (v1); 
    \draw[blue,bend left=10] (v9) edge (v0);   
   \end{scope}
\end{tikzpicture}\hspace{1em}
  \begin{tikzpicture}[thick, scale=.45]
    \tikzstyle{uStyle}=[shape = circle, minimum size = 6.0pt, inner sep = 0pt,
    outer sep = 0pt, draw, fill=white]
    \tikzstyle{lStyle}=[shape = rectangle, minimum size = 20.0pt, inner sep = 0pt,
outer sep = 2pt, draw=none, fill=none]
    \tikzset{every node/.style=uStyle}
    \begin{scope}
   \draw (8,3) node (v0) {};
    \foreach \i in {1,...,3,5,6,7,9,10,11,13}
    \draw (\i,-1) node (v\i) {};
    \node[shape = circle, minimum size = 6.0pt, inner sep = 0pt,
    outer sep = 0pt, draw = none] at (4, -1) {$\dots$};
    \node[shape = circle, minimum size = 6.0pt, inner sep = 0pt,
    outer sep = 0pt, draw = none] at (12, -1) {$\dots$};
    \foreach \i in {1,...,3,5,6,7,9,10,11,13}
    \draw (v0) edge (v\i);
    \draw[blue] (v7) edge (v6);
    \draw[blue,bend left] (v7) edge (v5);  
     \draw[blue,bend left] (v7) edge (v3);;  
     \draw[blue,bend left] (v7) edge (v2);
     \draw[blue,bend left] (v7) edge (v1); 
    \draw[blue,bend left=10] (v0) edge (v9);  
    \draw[blue,bend left=10] (v0) edge (v7);  
    \draw[blue,bend left=10] (v0) edge (v10);  
    \draw[blue,bend left=10] (v0) edge (v11);    
    \draw[blue,bend left=10] (v0) edge (v13);    
   \end{scope}
\end{tikzpicture}
\caption{There are only two non-isomorphic possibilities of embedding both $C_{n,1}$ (edges in black) and $C_{n,2}$ (edges in blue) into a graph on $n$ vertices. }
\label{fig:glueing}
\end{figure}
When adding $C_{n,k}$, we see that at least $k-2$ spine vertices of $C_{n,k}$ are neither mapped to $x$ nor~$y$.
If any of them has at least 3 neighbours in $N(x)\cap N(y)$, then the graph contains a~$K_{3,3}$, which cannot happen in a planar graph. Hence they can have at most two neighbours in $N(x)\cap N(y)$.
From each of the corresponding $k-2$ stars $C_{\frac{n}{k}, 1}$, at most the center and two leaves can be mapped to $N(x)\cap N(y)$ and the remaining vertices are mapped to the set $[n]\setminus (N(x)\cap N(y))$ of size $\frac{n+2}{2}$. There are 
$(k-2) \cdot \frac{n}{k} - 3 \cdot (k-2)$ remaining vertices. In order to embed them in a planar way, it has to hold that
    \begin{equation*}
        (k-2) \cdot \frac{n}{k} \leq 3 \cdot (k-2)+ \frac{n+2}{2}.
    \end{equation*}
Using the relations $k\geq 5$ and $n\geq 6k^2$, we recover
    \begin{equation*}
        0\leq 3k-5+\frac{(4-k)n}{2k}\leq -5, 
    \end{equation*}
    a contradiction.
\end{proof}
We show in \Cref{ex:threetrees} that already for $T_1 = C_{48,1}$, $T_2 = C_{48,2}$, and $T_3 = C_{48,8}$ there is no planar graph containing all three as subgraphs.
\Cref{thm:counterexample} means that the answer to Question 3 posed in~\cite{alecu2022treewidth} is negative, where it was asked whether for any three $n$-vertex caterpillars there is an $n$-vertex universal planar graph. However, this raises the question of how many vertices are needed such that for any three trees there is a planar graph containing them as subgraphs.

\begin{theorem}
    \label{thm:upper-bound-3-trees}
For any three trees $T_1$, $T_2$ and $T_3$ on $n$ vertices each, there exists a subgraph-universal planar graph for $\{T_1,T_2,T_3\}$ on $\frac32 n$ vertices.
\end{theorem}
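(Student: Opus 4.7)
The plan is to present an adaptive construction of a planar host graph $G$ on $\frac{3n}{2}$ vertices containing $T_1, T_2, T_3$ as subgraphs. The key idea is to split each input tree roughly in half using its Jordan separator, and then arrange the six resulting ``half-trees'' in a triangular planar layout in which pairs of sectors are shared cyclically, so that the total vertex count drops from the naive $3n$ down to $\frac{3n}{2}$.

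We begin by invoking Lemma~\ref{lem:good root} on each $T_i$ to select a centroid $J_i$ whose removal leaves components of size at most $n/2$. A first-fit grouping of these components into two bundles yields a decomposition $T_i = L_i \cup R_i$, with $L_i$ and $R_i$ joined only at $J_i$ and each of size at most $\lceil (n+1)/2 \rceil$. We then lay out the host $G$ with three ``sectors'' $S_1, S_2, S_3$, each of roughly $\frac{n}{2}$ vertices, placed around a central triangular face for a total of $\frac{3n}{2}$ vertices. Tree $T_i$ is planned to occupy the two adjacent sectors $S_i \cup S_{i+1}$ (indices modulo $3$), with $L_i$ embedded into $S_i$, $R_i$ into $S_{i+1}$, and $J_i$ placed at a shared interface vertex. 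Because distinct trees use distinct pairs of sectors, their embeddings occupy disjoint parts of the triangular layout and can be combined without crossings.

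The main obstacle lies in the observation that each sector $S_j$ must simultaneously host two half-trees, one from each of two distinct input trees (for example $S_2$ hosts $R_1$ from $T_1$ and $L_2$ from $T_2$). This forces $G[S_j]$ to be a planar graph on $\frac{n}{2}$ vertices that contains two prescribed trees with up to $\lceil (n+1)/2 \rceil$ vertices each as subgraphs, and packing two arbitrary trees into a planar graph of half their size is not always possible. The expected resolution exploits the freedom in the centroid decomposition: by choosing the grouping of the components of $T_i - J_i$ asymmetrically, or by replacing the centroid vertex with a centroid edge in the near-balanced case, one can ensure that in each sector one of the two halves is substantially smaller than $\frac{n}{2}$, leaving enough slack for the other. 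Making these choices mutually consistent for all three trees while keeping $|V(G)| = \frac{3n}{2}$ and the drawing planar is the most delicate part, and the proof will hinge on showing that such a simultaneous choice is always feasible.
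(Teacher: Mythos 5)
Your plan has a genuine gap, and a counting argument already rules out the proposed repair. If each $T_i$ is split at a separator into halves $L_i,R_i$ with $|L_i|+|R_i|\ge n-1$, the six halves have total size at least $3(n-1)$, while the three sectors together contain only $\frac{3}{2}n$ vertices. So on average every vertex of $G$ must carry two half-trees, and by pigeonhole at least one sector of $\frac{n}{2}$ vertices receives two halves whose sizes sum to at least $n-1$. Your proposed resolution --- choosing the centroid split so that in each sector one of the two cohabiting halves is ``substantially smaller'' --- therefore cannot be achieved in all three sectors simultaneously; the slack you need does not exist globally. Worse, even granting the packing, you would still need the union of two near-spanning forests drawn inside the same sector to be planar, and this is false in general: $K_{3,3}$ has arboricity $2$ and hence is the union of two forests on a common vertex set. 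Since your layout places the edges of $R_{i-1}$ and of $L_i$ in the same region, nothing prevents such a non-planar union. (A smaller issue: grouping the components of $T_i-J_i$ into two bundles of size at most $\lceil (n+1)/2\rceil$ each can also fail, e.g.\ for three components of size $\frac{n-1}{3}$, where one bundle necessarily has size $\frac{2(n-1)}{3}$.)

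The paper sidesteps all of this by not splitting the trees at all. It partitions the $\frac{3}{2}n$ vertices into three groups of $\frac{n}{2}$, placed on three rays from a common center, and assigns to each tree the full $n$ vertices of two of the rays (so each ray is shared by exactly two trees). Every tree has book thickness $1$, i.e.\ admits a one-page book embedding; after relabelling the vertices of $T_i$ along the concatenated ordering of its two rays, all edges of $T_i$ can be drawn inside the single wedge bounded by those rays. Distinct trees occupy distinct wedges, so their edge sets lie in disjoint open regions of the plane and the union is planar by construction: the sharing happens only at vertices on the rays, never between edge regions. The key ingredient you are missing is thus the one-page (outerplanar) embeddability of trees rather than the Jordan separator.
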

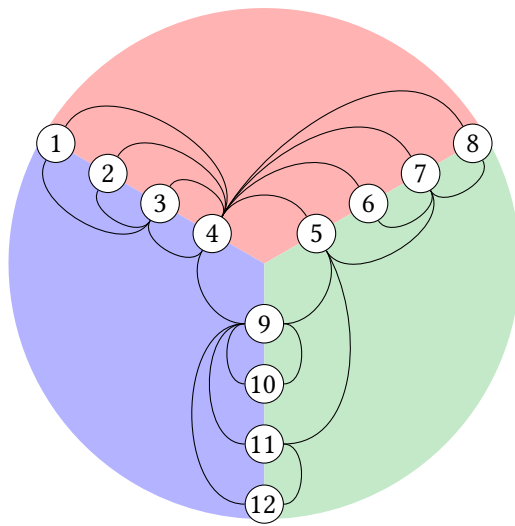
\begin{figure}
    \centering
    \begin{tikzpicture}[scale = 0.4]
        \node[draw,fill=white,inner sep=2pt,text width=2mm,align=center] (A) at (30:2cm) [circle] {5};
        \node[draw,fill=white,inner sep=2pt,text width=2mm,align=center] (B) at (30:4cm) [circle] {6};
        \node[draw,fill=white,inner sep=2pt,text width=2mm,align=center] (C) at (30:6cm) [circle] {7};
        \node[draw,fill=white,inner sep=2pt,text width=2mm,align=center] (D) at (30:8cm) [circle] {8};

        \node[draw,fill=white,inner sep=2pt,text width=2mm,align=center] (a) at (150:2cm) [circle] {4};
        \node[draw,fill=white,inner sep=2pt,text width=2mm,align=center] (b) at (150:4cm) [circle] {3};
        \node[draw,fill=white,inner sep=2pt,text width=2mm,align=center] (c) at (150:6cm) [circle] {2};
        \node[draw,fill=white,inner sep=2pt,text width=2mm,align=center] (d) at (150:8cm) [circle] {1};
        
        \node[draw,fill=white,inner sep=2pt,text width=2mm,align=center] (w) at (270:2cm) [circle] {9};
        \node[draw,fill=white,inner sep=2pt,text width=2mm,align=center] (x) at (270:4cm) [circle] {\hspace*{-1mm}10};
        \node[draw,fill=white,inner sep=2pt,text width=2mm,align=center] (y) at (270:6cm) [circle] {\hspace*{-1mm}11};
        \node[draw,fill=white,inner sep=2pt,text width=2mm,align=center] (z) at (270:8cm) [circle] {\hspace*{-1mm}12};

        \draw (A) to[out = -60, in = 0]     (w);
        \draw (A) to[out = -60, in = 0]     (y);
        \draw (A) to[out = -60, in = -60]   (C);
        \draw (B) to[out = -60, in = -60]   (C);
        \draw (C) to[out = -60, in = -60]   (D);
        \draw (w) to[out = 0  , in = 0]     (x);
        \draw (y) to[out = 0  , in = 0]     (z);

        \draw (a) to[out = 60, in = 120]    (A);
        \draw (a) to[out = 60, in = 120]    (B);
        \draw (a) to[out = 60, in = 120]    (C);
        \draw (a) to[out = 60, in = 120]    (D);
        \draw (a) to[out = 60, in = 60]     (b);
        \draw (a) to[out = 60, in = 60]     (c);
        \draw (a) to[out = 60, in = 60]     (d);

        \draw (w) to[out = 180, in = 180]   (x);
        \draw (w) to[out = 180, in = 180]   (y);
        \draw (w) to[out = 180, in = 180]   (z);
        \draw (w) to[out = 180, in = 240]   (a);
        \draw (a) to[out = 240, in = 240]   (b);
        \draw (b) to[out = 240, in = 240]   (c);
        \draw (b) to[out = 240, in = 240]   (d);

         \begin{scope}[on background layer]
             \fill[red!30!white] (0,0) -- (30:8.5cm) arc (30:150:8.5cm) -- (0,0);
             \fill[blue!30!white] (0,0) -- (150:8.5cm) arc (150:270:8.5cm) -- (0,0);
             \fill[green!30!white] (0,0) -- (30:8.5cm) arc (30:-90:8.5cm) -- (0,0);
         \end{scope}
    \end{tikzpicture}
    \caption{Illustration of the proof of \cref{thm:upper-bound-3-trees}.}
    \label{fig:3n/2}
\end{figure}

\begin{proof} 
In order to build a universal planar graph we use book embeddings of the given trees. A \emph{book embedding} of a graph $G$ is a decomposition of $G$ into outerplanar graphs that comes with an ordering $S$ of the vertices; the decomposition is such that each outerplanar graph has an embedding on a halfplane where the vertices lie on the boundary of the halfplane in the order $S$ (as on the spine of a book). The \emph{book thickness} of a graph is the smallest possible number of graphs in this decomposition. For example, every tree has a book embedding with thickness 1. 
    Let
    \begin{align*}
        S_1 &= (1, \ldots, n),\\
        S_2 &= \left( n, \ldots, \left\lfloor \frac{n}{2} \right\rfloor + 1, n+1, \ldots, \left\lfloor \frac{3n}{2} \right\rfloor \right),\\
        S_3 &=  \left( 1, \ldots, {\left\lceil \frac{n}{2} \right\rceil}, {n+1}, \ldots, \left\lfloor \frac{3n}{2} \right\rfloor \right).
    \end{align*}
    Since every tree has a book embedding with thickness 1, we can label the vertices of $T_i$ with $S_i$ such that this labelling yields a book embedding of $T_i$ with thickness 1. We take the union of the three trees, the resulting graph is planar. For an example see Figure~\ref{fig:3n/2}.
\end{proof}
We show in the following that there exist three caterpillars $T_1,T_2,T_3$ on $n$ vertices such that any planar graph containing all three caterpillars as a subgraph has at least $\frac{3}{2}n-o(n)$ vertices. In particular, setting $k\approx n^{1/3}$ and $\ell \approx n^{2/3}$ in the following theorem shows that there are trees $T_1,T_2,T_3$ such that any subgraph-universal planar graph containing all three trees has at least $\frac{3}{2}n-O(n^{2/3})$ vertices.
\begin{theorem}
    \label{thm:lower bound}
 Fix $1<k<\ell-2$ and $n$ large enough. Then every planar graph which contains $T_1=C_{n,1},T_2=C_{n,k}$ and $T_3=C_{n,\ell}$ as subgraphs has size at least
    \[
 \left( \frac{3}{2}-\frac{1}{k}-\frac{k}{\ell}\right) n-(k-1)(\ell-k)-\frac{\ell}{2}.
    \]

\end{theorem}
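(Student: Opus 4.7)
\medskip

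\noindent\textbf{Plan.} Fix any planar host $G$ with $T_1,T_2,T_3\hookrightarrow G$ and bound $|V(G)|$ from below. I first embed the star $T_1=C_{n,1}$: its centre maps to some vertex $x\in V(G)$ with $|N_G(x)|\geq n-1$, and I pick $A\subseteq N_G(x)$ with $|A|=n-1$. Writing $B=V(G)\setminus(\{x\}\cup A)$, we have $|V(G)|=n+|B|$, so the task reduces to showing
\[
|B|\;\geq\;\left(\tfrac12-\tfrac1k-\tfrac k\ell\right)n-(k-1)(\ell-k)-\tfrac\ell2.
\]

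The planarity input I would use is exactly the $K_{3,3}$-avoidance observation that drives the proof of \Cref{thm:counterexample}: for any two distinct $u,v\in V(G)\setminus\{x\}$, we must have $|N_G(u)\cap N_G(v)\cap A|\leq 2$, since three common neighbours in $A$ together with $\{x,u,v\}$ would form a forbidden $K_{3,3}$. In particular, if I can pin down a single ``auxiliary'' vertex $y\in A$ with $|N_G(x)\cap N_G(y)|$ large, this restricts every other vertex of $G$ to have at most two leaves-of-neighbours in that common set.

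I obtain such a $y$ from the embedding of $T_2=C_{n,k}$. Up to symmetry I may assume that some spine vertex of $T_2$ is sent to $x$; then its spine neighbour is sent to a vertex $y\in A$ whose $T_2$-leaves contribute to $N_G(x)\cap N_G(y)$. Splitting into cases on how many of $y$'s $T_2$-leaves actually land in $A$ (rather than in $B$) produces either a large common neighbourhood $C:=N_G(x)\cap N_G(y)$ of size $\approx n/k$, or an immediate contribution of order $n/k$ to $|B|$, which accounts for the $-n/k$ term in the bound. Then, mimicking the counting from the proof of \Cref{thm:counterexample}, I examine $T_3=C_{n,\ell}$: at most two of its $\ell$ spine vertices map to $\{x,y\}$, and for each of the remaining $\ell-O(1)$ spine vertices $z$, the $K_{3,3}$ constraint forces that at most two of its $n/\ell-1$ leaves lie in $C$. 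Summing, at least $(\ell-2)(n/\ell-3)\approx n-O(\ell)$ of $T_3$'s vertices must live outside $C$, and a careful tally of which of these are already in $A\cup\{x\}$ and which must go to $B$ supplies the remaining $n/2$.

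The main obstacle I foresee is the precise bookkeeping matching the stated constants. The product $(k-1)(\ell-k)$ should correspond to the maximal ``absorption'' allowed by the $K_{3,3}$ inequality when the $(k-1)$ interior spine slots of $T_2$ each permit up to two $T_3$-leaves to double up into $A\cap C$ across the $\ell-k$ relevant $T_3$-spine positions; the $\ell/2$ correction comes from the two spine endpoints having one fewer leaf and from the one spine vertex that may coincide with $x$ or $y$. Turning these informal matchings into the exact inequality requires splitting into cases based on where the preimages of $x$ and $y$ sit inside $T_2$ and $T_3$, and then applying the common-neighbourhood bound uniformly.
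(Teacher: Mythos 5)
There is a genuine quantitative gap: a single auxiliary vertex $y$ cannot drive the bound to $\tfrac32 n$. Your set $C=N_G(x)\cap N_G(y)$ coming from one spine vertex of $T_2=C_{n,k}$ has size only about $n/k$, and the strongest conclusion your $T_3$-counting can extract from it is that the $n-O(\ell)$ vertices of $T_3$ avoiding $C$ together with $C$ itself force $N\geq n+|C|-O(\ell)\approx n+n/k$. For the parameters where the theorem has content (large $k$, e.g.\ $k\approx n^{1/3}$), this is $n+o(n)$, nowhere near $\left(\tfrac32-\tfrac1k-\tfrac k\ell\right)n$. No bookkeeping of which excluded vertices land in $A$ versus $B$ can recover the missing $\tfrac{n}{2}$, because the total amount of ``forbidden space'' you have created is only $|C|\approx n/k$. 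The fix is to aggregate: let $L_y$ be the union of the leaf sets of \emph{all} $k-1$ spine vertices of $T_2$ other than $x$ (size $\geq\tfrac{k-1}{k}n-(k-1)$) and $L_z$ the union of the leaf sets of all $\ell-k$ spine vertices of $T_3$ outside $\{x\}\cup Y$ (size $\geq\tfrac{\ell-k}{\ell}n-(\ell-k)$). Since only $N-(n-1)$ vertices lie outside $L(x)$, both $L_y\cap L(x)$ and $L_z\cap L(x)$ are nearly full subsets of the $(n-1)$-set $L(x)$; yet summing your $K_{3,3}$ bound over all $(k-1)(\ell-k)$ pairs of spine vertices gives $|L_y\cap L_z\cap L(x)|\leq 2(k-1)(\ell-k)$, a constant. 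Inclusion--exclusion inside $L(x)$ then yields $|L_y\cap L(x)|+|L_z\cap L(x)|-(n-1)\leq 2(k-1)(\ell-k)$, which rearranges to the claimed bound. Your intuition that $(k-1)(\ell-k)$ counts pairwise absorptions is exactly right, but the mechanism in your main argument never sums over those pairs.

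A secondary issue: you assume ``up to symmetry some spine vertex of $T_2$ is sent to $x$.'' This is not a symmetry assumption—the host has $N>n$ vertices and the embedded copy of $T_2$ need not meet $x$ at all. The aggregated argument avoids this entirely: it only needs that at least $k-1$ spine vertices of $T_2$ differ from $x$, and at least $\ell-k$ spine vertices of $T_3$ differ from $x$ and from those, both of which hold trivially.
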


\begin{proof}
   Let $V(T_1)=\{x\} \cup L(x)$ where $x$ is the center of the star and $L(x)$ is the set of its leaves, $V(T_2)=\{y_1,\dots,y_k\} \cup (\cup_{i=1}^k L(y_i))$,$V(T_3)=\{z_1,\dots,z_\ell\}\cup(\cup_{i=1}^\ell L(z_i))$ where $Y=\{y_1,\dots,y_k\}$ (resp. $Z=\{z_1,\dots,z_{\ell}\}$) is the set of spine vertices of $T_2$ (resp. $T_3$) and $L(v)$ is the set of leaves of vertex $v$ in $T_2$ (or $T_3)$. 
   Consider a planar universal graph $G$ on $[N]$ for $T_1,T_2,T_3$. We assume $V(T_i)\subset [N]$ and $G=T_1 \cup T_2 \cup T_3$. We show that $N$ must be as large as claimed. 

 We can assume without loss of generality that $x=1$. Further, there exist (at least) $k-1$ vertices in $Y\setminus \{1\}$ and without loss of generality they are $2,\dots, k$. Further, there are (at least) $\ell-k$ vertices in $Z\setminus \{1,\dots,k\}$ and without loss of generality they are $k+1,\dots, \ell$. 
 Note that for every $2 \leq i \leq k$ and $k+1 \leq j \leq \ell$
   \begin{align}
   \label{eq:smallintersection}
       |L(1) \cap L(i) \cap L(j)|\leq 2 
   \end{align}
since otherwise $G$ contains a $K_{3,3}$-subgraph with one part $1,i,j$ and the other part consisting of three common neighbours of $1,i,j$ in $G$, and hence $G$ is not planar.

We let $L_y= L(2)\cup L(3) \cup \dots \cup L(k)$ and $L_z=L(k+1)\cup \dots \cup L(\ell)$ note that $|L_y|\geq \frac{(k-1)n}{k} - (k-1)$. At most $N-(n-1)$ of the vertices in $L_y$ can be outside of $L(1)$ since $|L(1)|=n-1$. Hence, 
\begin{align*}
   |L_y\cap L(1)| \geq \frac{k-1}{k} n-k +1-(N-n+1),
\end{align*}
and by the same arguments 
\begin{align*}
|L_z \cap L(1)| \geq \frac{\ell-k}{\ell} n-(\ell-k) -(N-n +1).
\end{align*}
By property \eqref{eq:smallintersection}, we get that
\begin{align*}
    n-1&\geq |L_y \cap L_z \cap L(1)|\\
    & \geq \left( \frac{k-1}{k}+\frac{\ell-k}{\ell}\right) n-2(N-n)-\ell -1-2(k-1)(\ell-k).
\end{align*}
Rearranging, this gives
\begin{align*}
    N & \geq \frac{1}{2}\left( \left( \frac{k-1}{k}+\frac{\ell-k}{\ell}+1\right) n-2(k-1)(\ell-k)-\ell\right)\\
     &=\left( \frac{3}{2}-\frac{1}{k}-\frac{k}{\ell}\right) n-(k-1)(\ell-k)-\frac{\ell}{2}.    \hfill \qedhere
\end{align*}
\end{proof}

\section{Planar universal graphs for (outer)planar graphs}\label{sec:planaruniversal}
Whereas there is a universal tree for $n$-vertex trees with $n^{\theta(\log (n))}$ vertices, we show that outerplanar and planar graphs behave differently. That is, an (outer)planar graph containing all $n$-vertex (outer)planar graphs has exponentially many vertices in $n$.
\planarlowerbound*

\begin{proof}
    Let $\mathcal{H}$ be the set of all $n$-vertex $4$-connected triangulations and let $G$ be a planar graph which is universal for $\mathcal{H}$. We consider a plane embedding of~$G$. Tutte~\cite{TutteCensus1962} showed that the number $k_n$ of $4$-connected triangulations is of the order of $n^{-5/2} (27/4)^n$.
    Note that given two $4$-connected triangulations $H_1,H_2$ from $\mathcal{H}$ in $G$ (and their induced embeddings) that share vertices, then either the vertices that they share appear on the outer face of both $H_1$ and $H_2$, or otherwise $H_1$ has to be contained in a bounded face of $H_2$ or vice versa (where the vertices of the face can be shared).
    As otherwise, if  $H_2$ has a vertex strictly inside and strictly outside a triangle of $H_1$ (or vice versa) then it has a cut set of size at most $3$. This would be a contradiction to the $4$-connectivity of $H_2$.
    
    We say that $H_1\leq H_2$ if $H_1$ is contained in a bounded face of $H_2$. This gives us a partial order on the set $\mathcal{H}$ which we extend to a linear order $H_1,\dots,H_{k_n}$ of $\mathcal{H}$. Note that removing all interior vertices of $H_{1},\dots,H_{i}$ in the embedding of $G$ leaves a graph $G'$ which is universal for $H_{i+1},\dots,H_{k_n}$. Hence
    \begin{align*}
        |V(G)| & \geq (n-3)\cdot k_n = \Omega(n^{-3/2} (27/4)^n). \hfill \qedhere
    \end{align*}
\end{proof}
We turn now to outerplanar graphs. We denote an outerplanar graph as \emph{path-like} when it is a maximal outerplanar graph and there is an embedding in which each triangle has an edge on the outer face. Note that the embedding with this property is unique up to isomorphism. Equivalently, the weak dual of a path-like outerplanar graph is a path. The \emph{weak dual} of a plane graph is a graph whose vertices are the bounded faces of the original graph such that two bounded faces are connected by an edge in the weak dual if they are adjacent in the original graph.

Let $\mathcal{H}_n$ be the class of path-like outerplanar graphs on $n$ vertices. 
\begin{theorem}
       An outerplanar graph containing all $n$-vertex outerplanar graphs has at least $\sqrt{2}^{n-5}$ vertices. 
\end{theorem}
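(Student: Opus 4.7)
My plan is to reduce universality to a counting question about paths in the weak dual tree of $H$. First, I would argue that without loss of generality $H$ may be assumed to be a $2$-connected maximal outerplanar graph on $N$ vertices: any outerplanar graph can be augmented by adding edges (connect the components, then triangulate the bounded faces) to such a graph on the same vertex set, and this augmentation only enlarges the family of subgraphs, hence preserves universality. Under this reduction the weak dual $H^{*}$ is a tree on $N-2$ nodes, one per triangular face of $H$.

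The heart of the argument is a bijection between path-like $n$-vertex outerplanar subgraphs of $H$ and subpaths of $H^{*}$ on $n-2$ nodes. Given such a $G \subseteq H$ with Hamilton cycle $C_G$, outerplanarity of $H$ forces every vertex of $H$ onto its outer face, so no vertex of $H$ lies strictly inside $C_G$. Maximality of $G$ means its chords form a maximum non-crossing family of chords of $C_G$, so $H$ cannot contain any further chord inside $C_G$ without crossing one of $G$'s chords. Hence the $n-2$ triangular faces of $H$ enclosed by $C_G$ are exactly the triangles of $G$, and they form a connected subtree of $H^{*}$ that is a path precisely when $G$ is path-like. Conversely, every connected $(n-2)$-node subtree of $H^{*}$ bounds a simply connected region whose boundary, by a short Euler count, is a cycle on $n$ vertices; together with the interior chords this is a $2$-connected maximal outerplanar subgraph of $H$.

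Because any path in a tree is uniquely determined by its two endpoints, the number of $(n-2)$-node subpaths of $H^{*}$ is at most $\binom{N-2}{2}$, so $|\mathcal{H}_n| \leq \binom{N-2}{2}$. To lower-bound $|\mathcal{H}_n|$ I would count labeled path-like triangulations of the convex $n$-gon: specifying the apex of a starting ear ($n$ choices) and then a binary left/right extension choice for each of the $n-4$ intermediate triangles yields $n \cdot 2^{n-4}$ sequences, and each triangulation is counted twice (once per ear), so there are $n \cdot 2^{n-5}$ labeled path-like triangulations. Quotienting by the dihedral group of order $2n$ acting on the polygon gives $|\mathcal{H}_n| \geq 2^{n-6}$. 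Combining
\begin{equation*}
2^{n-6} \;\leq\; |\mathcal{H}_n| \;\leq\; \binom{N-2}{2} \;\leq\; \frac{(N-2)^{2}}{2}
\end{equation*}
yields $(N-2)^{2} \geq 2^{n-5}$ and hence $N \geq \sqrt{2}^{\,n-5}$.

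The main obstacle I expect is the careful verification of the bijection in the second paragraph, specifically the step that invokes maximality of $G$ together with planarity of $H$ to show that $H$ contains no extra chord inside $C_G$; the remaining steps are routine counting.
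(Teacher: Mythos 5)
Your proposal is correct and takes essentially the same route as the paper: both reduce the problem to counting path-like maximal outerplanar graphs (of which there are $2^{\Theta(n)}$ up to isomorphism) against paths in the weak dual tree of a maximal outerplanar host graph, of which there are only $O(N^2)$. The paper packages the quadratic bound as a pigeonhole over ear-triangles of the host followed by counting triangles at dual distance $n-3$, whereas you bound the number of dual paths directly by their endpoint pairs via $\binom{N-2}{2}$; this is the same argument in different clothing, and your flagged verification that the triangles of a path-like subgraph are genuine faces of the host goes through.
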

\begin{proof}
An ear of a maximal outerplanar graph is a triangle with a vertex of degree $2$. Note that each path-like outerplanar graph with $n \geq 4$ vertices has exactly two ears. If $n=3$, $\mathcal{H}_n$ contains just a single triangle and for $n=4,5$ there is also a unique path-like outerplanar graph up to isomorphism (of the embedding). We give a lower bound on the pairs $(H,\Delta)$ where $H\in \mathcal{H}_n$ and $\Delta$ is an ear of $H$. 
Take the unique graph in $\mathcal{H}_5$ and $\Delta$ as one of its ears. We think of the outerplanar graph as rooted in $\Delta$. We can recursively glue a triangle to an edge on the outer face of the other ear, that is, the ear that is not $\Delta$. We have at each recursion the choice between two edges. This yields $2^{n-5}$ choices for $n\geq 6$ and each choice yields a different pair $(H,\Delta)$. 

We can assume that our universal outerplanar graph $G$ is maximal outerplanar, which means the number of triangles is $v(G)-2$, where $v(G) = |V(G)|$. By the pigeonhole principle there exists a triangle $\Delta$ of $G$ such that there are $2^{n-5}/(v(G)-2)$ distinct path-like outerplanar subgraphs $H$ for which $\Delta$ is an ear in $H$. Note that there is a bijection between path-like outerplanar graphs that have $\Delta$ as an ear and triangles at distance $n-3$ to $\Delta$ (where the distance between triangles is measured as the graph distance in the dual). But this means there are $2^{n-5}/(v(G)-2)$ distinct triangles of distance exactly $n-3$ to $\Delta$, hence $v(G)-2\geq 2^{n-5}/(v(G)-2)$, which by rearranging proves the theorem.
\end{proof}
We however found a polynomially sized planar graph which is universal for $\mathcal{H}_n$ from which we can construct a subexponential sized planar graph for all outerplanar graphs.
\begin{theorem}
    There exists a planar graph on $n^{\theta(\log(n))}$ vertices which contains every $n$-vertex outerplanar graph as a subgraph. 
\end{theorem}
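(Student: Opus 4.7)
The plan is to bootstrap the polynomially-sized planar universal graph $U_n$ for path-like outerplanar graphs $\mathcal{H}_n$ (claimed just before the theorem statement) into a planar graph $U^*_n$ that is universal for all $n$-vertex outerplanar graphs, via a recursion driven by the weak-dual tree structure. Given an outerplanar graph $G$ on $n$ vertices, first triangulate it to a maximal outerplanar $\tilde G$ on $n$ vertices; its weak dual $T_{\tilde G}$ is a tree on at most $n-2$ nodes of maximum degree $3$. Applying \Cref{lem:good root} to $T_{\tilde G}$ gives a Jordan-separator triangle $\Delta^*$, and extending $\Delta^*$ to a longest path through $\Delta^*$ in $T_{\tilde G}$ yields a path-like outerplanar ``spine'' $H^* \in \mathcal{H}_{n^*}$ with $n^* \le n$. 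The off-spine components form $O(n)$ maximal outerplanar graphs, each on at most $\lceil n/2 \rceil + O(1)$ vertices and each attached along a single edge of $H^*$.

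Next I would define $U^*_n$ recursively: the base case is a single triangle, and for $n > 3$ we start with a copy of $U_n$ (of size $n^{O(1)}$ with $n^{O(1)}$ edges, triangulated for convenience) and, into each of the two faces incident to every interior edge, glue a copy of $U^*_{\lceil n/2 \rceil + O(1)}$ along that edge. Planarity is preserved at every step because each attached copy lies inside a previously empty triangular face. Writing $W(n) = |U^*_n|$, one obtains
\begin{equation*}
W(n) \,\le\, n^{O(1)} + n^{O(1)} \cdot W(\lceil n/2 \rceil + O(1)) \,=\, n^{O(1)} \cdot W(\lceil n/2 \rceil),
\end{equation*}
which unrolls to $W(n) = n^{O(\log n)}$ as required. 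The embedding of any $G$ then proceeds by induction on $n$: embed the spine $H^*$ into $U_n$ via the $\mathcal{H}_n$-universality of $U_n$, and for each off-spine piece attached along an edge $e$ of $H^*$, embed that piece into the copy of $U^*_{\lceil n/2 \rceil + O(1)}$ glued to the image of $e$ on the appropriate face side.

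The main obstacle will be controlling which \emph{side} of each spine edge the sub-pieces are attached to: when $H^*$ is embedded in the host plane graph $U_n$, each of its edges separates two faces of $U_n$, while in $\tilde G$ a given off-spine piece lies on a specific side of the corresponding edge. Attaching a copy of $U^*_{\lceil n/2 \rceil + O(1)}$ to \emph{both} faces incident to every interior edge of $U_n$ resolves this ambiguity at the cost of only a factor of two, which does not change the asymptotics of the recursion. A secondary technical point is ensuring that $U_n$ can be refined (triangulated, or otherwise) so that the faces into which we glue are combinatorially simple and disjoint from the embedded spine; this can be arranged once up front without affecting the polynomial bound on $|U_n|$. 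With these adjustments in place, the recursive construction delivers a planar graph on $n^{O(\log n)}$ vertices containing every $n$-vertex outerplanar graph as a subgraph.
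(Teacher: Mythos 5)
Your construction is essentially the paper's: a rooted universal planar graph for path-like outerplanar graphs (the paper's \cref{lem:path-like}), combined with a recursion that glues copies of the universal graph for roughly half-size instances onto the edges of that host, driven by a path in the weak dual tree whose deletion leaves components of at most half the size. The size recursion and the resulting bound $n^{O(\log n)}$ are the same. Your worry about which \emph{side} of a spine edge a piece attaches to is a non-issue: containment is as an abstract subgraph, and gluing a planar graph along a single edge preserves planarity regardless of the face it is drawn into, so the paper simply appends one copy per edge.

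The one genuine gap is in the induction. For the recursive embedding to work, each off-spine piece must be embedded into its glued copy of $U^*_{\lceil n/2\rceil+O(1)}$ so that its unique attachment edge lands on the glued (root) edge; hence the statement you must prove by induction is the \emph{rooted} universality of $U^*_m$ (any maximal outerplanar graph with a designated outer edge embeds so that the designated edge maps to the root edge), not the plain universality you state. Correspondingly, at each level the spine path in the weak dual must contain the triangle carrying the root edge, and a longest path through the Jordan separator $\Delta^*$ of \cref{lem:good root} need not pass through that triangle. The fix is easy: take the tree path from the root triangle to $\Delta^*$ and extend it arbitrarily beyond $\Delta^*$; since $\Delta^*$ still lies on the path, every component of $T\setminus P$ is contained in a component of $T-\Delta^*$ and so has at most half the nodes. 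This is exactly what the paper's Gy\'arf\'as path lemma (\cref{lem:gayarfas}) packages: a path starting at a prescribed vertex whose removal leaves only small components. With the induction hypothesis strengthened to the rooted version and the spine path chosen this way, your argument goes through and coincides with the paper's proof.
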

We say a graph $G$ is a universal graph for $\mathcal{H}_n$ \emph{rooted at $e$} if $e$ is an edge in a triangle $\Delta$ of $G$ such that for every $H\in \mathcal{H}_n$ and for every ear $\Delta'$ in $H$ and $e'$ on the outer face of $\Delta'$ there exists a subgraph of $G$ and an isomorphism to $H$ which maps $\Delta$ to $\Delta'$ and $e$ to $e'$. 
\begin{lemma}
\label{lem:path-like}
    Let $n\geq 3$. There is a rooted universal planar graph $G_n$ for path-like $n$-vertex outerplanar graphs on less than $n^2$ vertices. 
\end{lemma}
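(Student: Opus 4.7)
The plan is to construct $G_n$ explicitly and verify both its size and its universal property. The first step is to set up a combinatorial encoding: a path-like outerplanar graph $H \in \mathcal{H}_n$, once rooted at an ear $\Delta'$ with a specified outer edge $e'$, is uniquely determined by a binary sequence $b_1 b_2 \cdots b_{n-3} \in \{0,1\}^{n-3}$. Here $b_i$ records which of the two non-shared edges of the $i$-th spine triangle is used to attach the $(i+1)$-st triangle. Thus the universality requirement amounts to embedding all $2^{n-3}$ such rooted sequences in a single planar host graph.

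Next, I would construct $G_n$ level by level, starting from the root triangle $\Delta$ containing the root edge $e$. At each level $k = 1, 2, \ldots, n-3$, I attach new triangles in a planar way so that every length-$k$ prefix of a binary sequence is realized by a sequence of adjacent triangles in $G_n$ starting at $\Delta$, with each step introducing a fresh vertex. The key to getting only $O(n^2)$ vertices is extensive sharing across sequences: although there are $2^{n-3}$ sequences, many of them merge to a common "level-$k$ state" (for example, the prefixes $01$ and $10$ can reach a common triangle through different intermediate triangles), so that each level adds only $O(n)$ triangles. Summing over the $n-3$ levels then gives $|V(G_n)| \leq \binom{n}{2} + O(n) < n^2$. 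The universal property follows by induction on $n$: given a rooted $H \in \mathcal{H}_n$ with its binary sequence, the inductive hypothesis provides the embedding of the first $n-4$ triangles, and the construction at the top level supplies the final triangle in both the "left" and the "right" position, matching $b_{n-3}$.

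The main obstacle is ensuring self-avoidance: the embedded walk must introduce a fresh vertex at each step, so that the resulting subgraph is genuinely a path-like outerplanar graph on $n$ vertices. Naive candidates such as the triangular lattice fail, because the triangle-adjacency is the hexagonal lattice, whose $6$-cycles force sequences like "always left" to eventually revisit a triangle (or, just as bad, revisit a vertex) once $n-3 \geq 6$. The construction must therefore be arranged carefully---either as a wedge-shaped triangulated region with the root placed so that no binary sequence of length $n-3$ curls back on itself, or via a recursive "stacking" in which each newly created triangle is attached along a fresh edge---to break the short cycles in the triangle-adjacency and guarantee that the walks corresponding to all $2^{n-3}$ sequences remain both triangle-disjoint and vertex-disjoint from their own past. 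Verifying this planar sharing pattern, while keeping the vertex count strictly below $n^2$, is the crux of the proof.
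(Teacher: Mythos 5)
Your framing is sound: encoding a rooted path-like outerplanar graph by the binary sequence of $n-3$ attachment choices, aiming for a planar host of quadratic size, and correctly diagnosing that a naive triangulated region (e.g.\ the triangular lattice) fails because long constant runs of the sequence curl back and revisit vertices. But the proposal stops exactly where the proof has to start: no host graph is ever exhibited. The load-bearing claim --- that the construction can be organized into $n-3$ levels in which the $2^k$ prefixes of length $k$ ``merge to a common level-$k$ state'' so that each level contributes only $O(n)$ new triangles --- is asserted, not proved, and it is not even clear what the merging states are or why their number stays linear; a priori the set of reachable triangles after $k$ steps could grow exponentially unless the host is designed to prevent it. You concede this yourself (``verifying this planar sharing pattern \dots is the crux of the proof''), so the crux of the lemma is missing.

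For contrast, the paper's construction is explicit and organized not by levels of the binary sequence but by its maximal constant runs. Take the second power $P_n^2$ of a path $v_1\dots v_n$; this alone realizes the strictly alternating sequence. Then, inside each triangle $v_iv_{i+1}v_{i+2}$, stack a nested chain of $n-3-i$ vertices $w_{i+2,j}$, each joined to $v_i$, $v_{i+1}$ and its predecessor; such a chain realizes an arbitrarily long run in which every new vertex attaches to the same pivot (a fan). Two copies of this gadget, glued along the root triangle, account for the two possible first attachment choices, giving $3n-7+2\sum_{i=1}^{n-5} i < n^2$ vertices. Universality is then an induction that peels off the first maximal fan of $H$ (the vertices $u_4,\dots,u_{j+1}$ all adjacent to $u_2$), places it on the stacked chain of the first triangle, and embeds the remainder, which again begins with a fan, into the copy of $G_{n-1}'$ living on $v_2,\dots,v_n$. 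If you want to salvage your level-by-level plan, you would need to identify a bounded state set explicitly and show the transition gadgets can be drawn planarly without vertex reuse; the fan decomposition is precisely the device that makes this work, and without it (or an equivalent) the argument is incomplete.
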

\begin{proof}
We construct $G_n$ for $n\geq 3$ using two auxiliary graphs $G_n'$ and $G_n''$. We start with $G_n'$. Consider $P_{n}=v_1v_2\dots v_{n}$ a path of length $n$ and connect every second vertex, which gives the second power $P_{n}^2$ of the path.  Let $w_{i+2,0}=v_{i+2}$ for $1\leq i \leq n-2$ and stack $w_{i+2,j}$ into $v_{i}v_{i+1}w_{i+2,j-1}$ for $1\leq j \leq n-3-i$, see Figure~\ref{fig:universalforouterplanar}. We call the resulting graph $G'_n$. 

Now create the graph $G_n''$
in a similar way by starting with the path $x_1\dots x_n$ instead of $v_1 \ldots v_n$. Take the union of $G_n'$ and $G_n''$ and identify $x_1=u_2$, $x_2=u_1$ and $x_3=u_3$ and the $j$-th stacked vertex in the triangle $u_1u_2u_3$ with the $j$-th stacked triangle in $x_1x_2x_3$ for $1\leq j \leq n-4$. Let $G_n$ be the resulting graph, see Figure~\ref{fig:universalforouterplanar}. The number of vertices of $G_n$ is $3n-7+2\sum_{i=1}^{n-5} i <n^2$.
    
    We describe a process to generate all path-like outerplanar graphs rooted at an edge. Start with a triangle $u_1u_2u_3$ and fix the edge $u_1u_2$ in the triangle, which is our root. For each $4\leq i\leq n$ create a vertex $u_i$ and connect it with $u_{i-1}$ and one of the two neighbours of $u_{i-1}$. We have two choices at each step. In this way we generate every path-like outerplanar graph with a marked edge on one of its ears.

   Given a path-like outerplanar graph $H$ as above we assume that $u_4$ is connected to $u_2$ and $u_3$ and $|V(H)|\leq n$. By induction on $n$, we show that we can embed $H$ into $G_n'$ such that $u_1=v_1$ and $u_2=v_2$. Then it follows that we can also embed any path-like outerplanar graph for which $u_4$ is connected to $u_1,u_3$ into $G_n$ such that $u_1=v_1$ and $u_2=v_2$ by the symmetry of $G_n$. 
   
    For the base case, clearly we can embed the triangle into $G_n$ such that any edge of the triangle is matched to $v_1v_2$. Assume now that $n\geq 4$ and $H$ is not a triangle. Let $j$ be the smallest index such that $u_{j+2}$ is not adjacent to $u_2$ and if no such index exists set $j=|V(H)|-1$. Note that $3\leq j$ by the choice of $u_4$. By induction we can embed the path-like outerplanar graph induced by $u_2 u_{j}\dots u_n$ into $G_{n-1}'$ on the vertex set $v_2\dots v_{n}$ such that $u_2=v_2$, $u_{j}=v_3$ and $u_{j+1}=v_4$ (note that $u_{j+2}$ is adjacent to $u_{j}$ and $u_{j+1}$ so we are indeed in the same situation as before). Note that $u_j=w_{0,3}$ and set $u_{j-k}=w_{k,3}$ for $0\leq k \leq j-3$, $u_1=v_1$ and $u_2=v_2$.  This gives an embedding of $H$ into $G_n'$.
\end{proof}  

\begin{figure}
    \centering
    \begin{tikzpicture}
        \node at (0,0) {\includegraphics[width=0.3\textwidth,page=1]{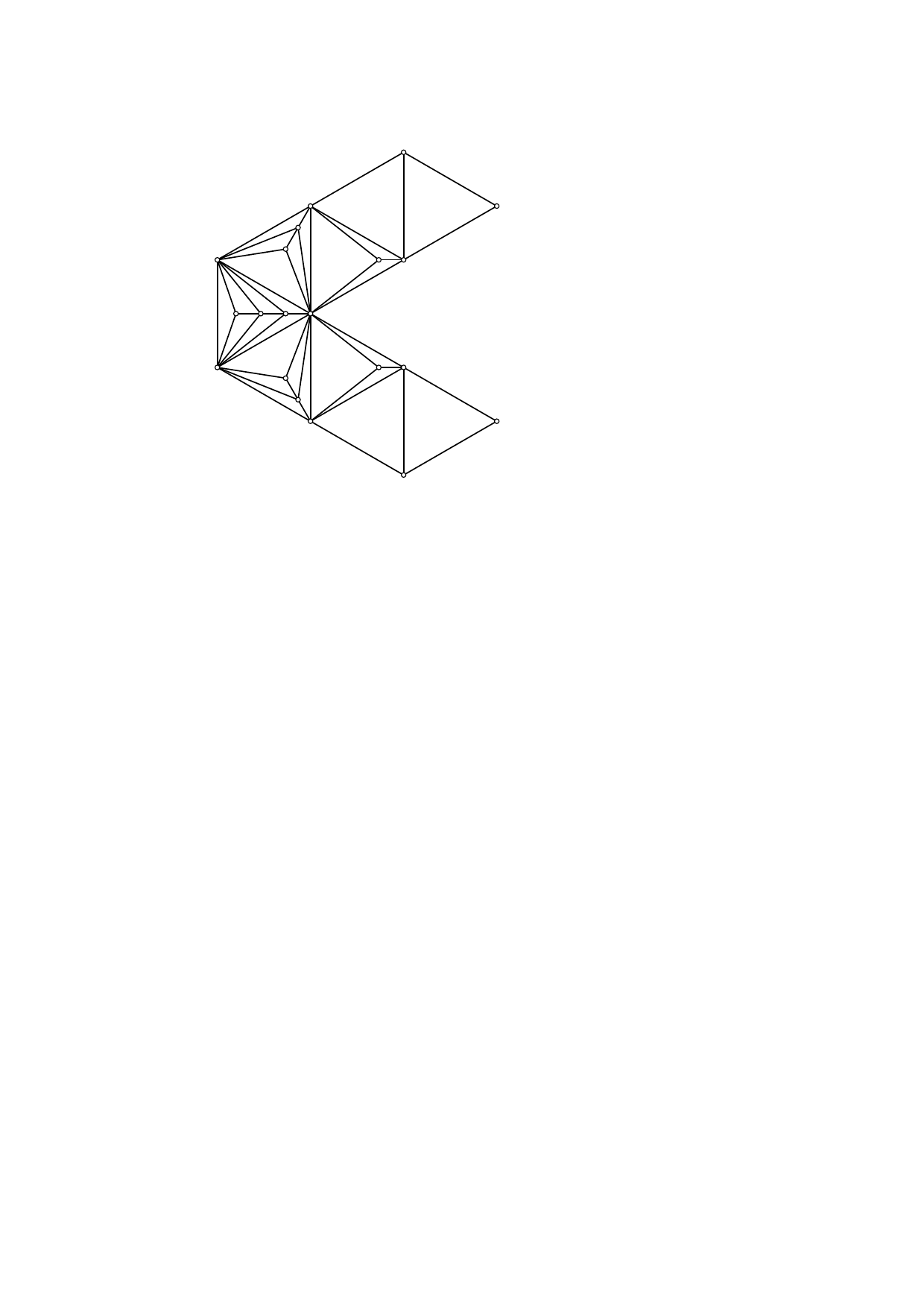}};

        \node at (5,0) {\includegraphics[width=0.3\textwidth,page=2]{figures/outerplanarinplanar-new.pdf}};
    \end{tikzpicture}
    \caption{The universal planar graph $G_7$ for all $7$-vertex path-like outerplanar graphs and the graph $G_7'$.}
    \label{fig:universalforouterplanar}
\end{figure}

Similarly to the Jordan-separator from \Cref{sec:universal_construction} we seek a way to split our graphs into small components. The following lemma follows by the Gyárfás path argument.
\begin{lemma}\cite{gayarfas}\label{lem:gayarfas}
    Let $T$ be a tree and let $v$ be a vertex of $T$. Then there exists a path $P$ in $G$ starting at $v$ such that the connected components of $G\backslash P$ contain at most $\frac{|V(G)|-1}{2}$ vertices each.
\end{lemma}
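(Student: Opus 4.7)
The plan is to construct $P$ by a greedy extension starting from $v$, always stepping into the largest remaining subtree until no component of $T \setminus V(P)$ is too big. Set $n := |V(T)|$ and initialize $P_0 := v$. At a generic step with current path $P_i = v v_1 \cdots v_i$, I would examine the forest $T \setminus V(P_i)$; because $T$ is a tree, every connected component $C$ of this forest is attached to $V(P_i)$ through a unique edge, hence through a unique vertex $p(C) \in V(P_i)$. The component sizes sum to $n - i - 1 \leq n-1$, so at most one component has size strictly larger than $\tfrac{n-1}{2}$. If no such ``large'' component exists, I stop and output $P := P_i$; otherwise I call this unique large component $C^{\ast}$ and extend the path into it.

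The heart of the argument is the invariant that $p(C^{\ast}) = v_i$, i.e.\ the large component always hangs off the current endpoint of the path. I would establish this by induction on $i$: after extending the path by the unique neighbor $v_{i+1}$ of $v_i$ in $C^{\ast}$, every component of $T \setminus V(P_{i+1})$ is either obtained by splitting $C^{\ast}$ at $v_{i+1}$ (in which case it is attached to $v_{i+1}$), or it was already a component at step $i$ different from $C^{\ast}$ (in which case it has size at most $\tfrac{n-1}{2}$ by the choice of $C^{\ast}$, and so cannot be the new large component). In particular, any new large component must be attached at the new endpoint $v_{i+1}$, preserving the invariant.

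Given the invariant, each extension step is well-defined; and since the path strictly lengthens while being bounded by $n$, the procedure terminates after at most $n-1$ rounds. At termination, by construction, every component of $T \setminus V(P)$ has at most $\tfrac{n-1}{2}$ vertices, which is precisely the conclusion of the lemma. The main subtlety I foresee is verifying the attachment invariant carefully via the tree structure; the counting step that at most one component can exceed $\tfrac{n-1}{2}$ is immediate from $\sum_C |C| \leq n-1$, and termination is automatic, so the induction on $i$ is the only real work.
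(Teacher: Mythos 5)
Your argument is correct: the greedy extension into the unique oversized component, together with the invariant that this component always hangs off the current path endpoint (which holds because a component of $T\setminus V(P_i)$ attaches to the path by a unique edge, and splitting it at $v_{i+1}$ only creates components rooted at $v_{i+1}$), is exactly the standard Gy\'arf\'as path argument. The paper does not prove this lemma but merely cites it, and your proof is a faithful rendering of the cited argument, so there is nothing to reconcile.
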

We call a path as in \Cref{lem:gayarfas} a \emph{Gyárfás path starting at $v$}. 
\begin{lemma}
     There is a universal planar graph for $n$-vertex outerplanar graphs with at most $n^{2\log_2(n)}$ vertices.  
\end{lemma}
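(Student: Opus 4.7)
My plan is to construct the required universal planar graph $U_n$ recursively from the path-like universal graph $G_n$ of \Cref{lem:path-like}, using the Gyárfás path argument from \Cref{lem:gayarfas} applied to the weak dual of an outerplanar graph $H$ to break $H$ into a path-like core glued to outerplanar subgraphs of roughly half the size.

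First, given an outerplanar graph $H$ on $n$ vertices, I would add chords to make $H$ maximal outerplanar, which only makes the embedding task harder. The weak dual $T_H$ is then a tree on $n-2$ nodes whose vertices are the triangular faces of $H$ and whose edges are the interior edges of $H$. Applying \Cref{lem:gayarfas} to $T_H$, started at any prescribed triangle, produces a path $P$ such that every component of $T_H - V(P)$ has at most $(n-3)/2$ vertices. The triangles indexed by $V(P)$ span a path-like outerplanar subgraph $H_P \subseteq H$, and for every component $C$ of $T_H - V(P)$ the triangles of $C$ together with the unique edge that $C$ shares with a triangle on $P$ form a \emph{hanging} outerplanar subgraph on at most $(n+1)/2$ vertices, attached to $H_P$ along a single edge that lies on the outer face of $H_P$.

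Next, I would define $U_n$ recursively as a \emph{rooted} universal planar graph with a marked edge $e_0$ such that any outerplanar graph on at most $n$ vertices with an arbitrary prescribed outer-face edge $e_H$ embeds as a subgraph of $U_n$ with $e_H \mapsto e_0$. For small $n$ the construction is trivial. For larger $n$, I would take a planar embedding of $G_n$ and designate its root edge as $e_0$; then, for every edge $e$ of $G_n$ that belongs to some triangle of $G_n$ and for each of the (at most two) faces of $G_n$ incident to $e$, I would glue inside that face a fresh rooted copy of $U_{\lceil (n+1)/2 \rceil}$ with its root identified with $e$. Auxiliary central vertices placed one per face of $G_n$, joined to all vertices of the face, would subdivide each face into triangular sectors so that the glued copies occupy pairwise disjoint sub-regions and the whole construction stays planar. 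To embed a given $H$, I would start the Gyárfás path at the triangle carrying the prescribed $e_H$ (so that this triangle is an ear of $H_P$), then use \Cref{lem:path-like} to embed $H_P$ inside $G_n$ with $e_H \mapsto e_0$, and finally recurse on each hanging subgraph into the copy of $U_{\lceil(n+1)/2\rceil}$ glued on the appropriate side of the $G_n$-image of its attachment edge.

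For the size bound, $|V(G_n)| < n^2$ and $G_n$ has $O(n^2)$ faces and $O(n^2)$ (triangle-edge, side) incidences, so the recurrence $f(n) \leq C n^2 \cdot f(\lceil (n+1)/2 \rceil)$ holds for $f(n) = |V(U_n)|$ with an absolute constant $C$. Iterating $\lceil \log_2 n \rceil$ times gives $f(n) \leq n^{\log_2 n + O(1)}$, well within the claimed bound $n^{2 \log_2 n}$. The main technical obstacle I anticipate is matching the \emph{sides} of the embedding: the outerplanar orientation of $H$ forces each hanging subgraph onto a particular side of its attachment edge, and this side is determined by how \Cref{lem:path-like} embeds $H_P$ inside $G_n$; I would resolve this by systematically attaching recursive copies on \emph{both} sides of every triangle edge of $G_n$, which only inflates $f(n)$ by a constant factor and therefore does not affect the asymptotic bound.
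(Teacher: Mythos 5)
Your proposal is correct and follows essentially the same route as the paper: build the rooted universal graph for path-like outerplanar graphs from \cref{lem:path-like}, split an arbitrary maximal outerplanar graph via a Gyárfás path (\cref{lem:gayarfas}) in its weak dual into a path-like core plus hanging pieces of at most about half the size, and recurse by gluing rooted copies of the smaller universal graph onto the edges of $G_n$, with the same recurrence $f(n)\leq Cn^2 f(\lceil n/2\rceil+O(1))$ yielding $n^{O(\log n)}$. Your extra care about which side of an attachment edge a hanging piece must occupy (gluing on both sides, with auxiliary face vertices to keep the construction planar) is a harmless constant-factor refinement of the paper's ``append a copy at each edge'' step, not a different argument.
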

\begin{proof}
   By \cref{lem:path-like} there exists a universal planar graph $G_n$ for path-like $n$-vertex outerplanar graphs rooted at an edge $e$ of size $n^2$. Let $\mathcal{G}_3$ be a triangle, $\mathcal{G}_4=G_4$ and $\mathcal{G}_5=G_5$. Construct $\mathcal{G}_n$ by appending $\mathcal{G}_{\lceil \frac{n}{2}\rceil}$ at each edge $e'$ in $G_n$ by identifying the special edge $e$ in $G_{\lceil \frac{n}{2}\rceil}$ (within $\mathcal{G}_{\lceil \frac{n}{2}\rceil}$) with $e'$. 

We show that any maximal outerplanar graph $H$ with an edge $e'$ on the outer face and on at most $n$ vertices can be embedded into $\mathcal{G}_n$ such that $e'$ is mapped to $e$. Consider the tree $T$ on $|V(H)|-2$ vertices which is the weak dual of $H$. Take a Gyárfás path $P$ in $T$ starting at the vertex whose triangle contains $e'$. Each connected component in $T\setminus P$ has less than $ \frac{n-2}{2}=\frac{n}{2} -1$ vertices and hence the maximal outerplanar graph corresponding to the subtrees has at most $\lceil \frac{n}{2}\rceil$ vertices. We embed the outerplanar graph corresponding to the Gyarfas path into $G_n$ and the outerplanar graph corresponding to the subtrees into the corresponding graphs $\mathcal{G}_{\lceil \frac{n}{2}\rceil}$ by induction. 

We show by induction that $\mathcal{G}_n$ has at most $n^{2\log_2(n)}$ vertices for $n\geq 3$. This is true for $n=3,4,5,6$ since $\mathcal{G}_3,\mathcal{G}_4,\mathcal{G}_5,\mathcal{G}_6$ have $3,5,8,40$ vertices, respectively. For $n\geq 7$ it holds that the number of vertices in $\mathcal{G}_n$ is at most the number of edges in $G_n$ times the number of vertices in $\mathcal{G}_{\lceil \frac{n}{2}\rceil}$ which is at most
\begin{align*}
    3n^2\left(\frac{n+1}{2}\right)^{2\log_2(\frac{n+1}{2})} &\leq 12(n+1)^{2\log_2(n+1)-2} \leq n^{2\log_2(n)}
\end{align*}
    where the last inequality holds for $n\geq 4$.
\end{proof}

\section{Computational results}
\label{sec:computer}

We used a Sagemath script
to search for $n$-vertex planar graphs that are universal for $n$-vertex trees for small values of~$n$.
Apparently, our script
found universal stacked triangulations
on $n \le 20$ and $n=22$ vertices, which was our initial hint towards \Cref{thm:polynomial_bound}.
We provide the script and the examples found by the computer as supplemental data~\cite{supplemental_data_anonymous}. 
Note that the memory requirements and computing times 
grow exponential in the number of vertices as all triangulations are stored in RAM. 
For $n=15$ the example can be found in about 5 CPU minutes on an average laptop.
However, to find examples for $n=16,\ldots,20$ and $n=22$ (see \Cref{fig:stacked22})
we used a computing cluster.
The example for $n=22$ was found after about 7 CPU days. The computations for $n=21$ timed out after 8 CPU days. 
Moreover for $n \geq 48$ there is no universal graph for all trees.
\begin{figure}
    \centering\includegraphics[scale=0.5]{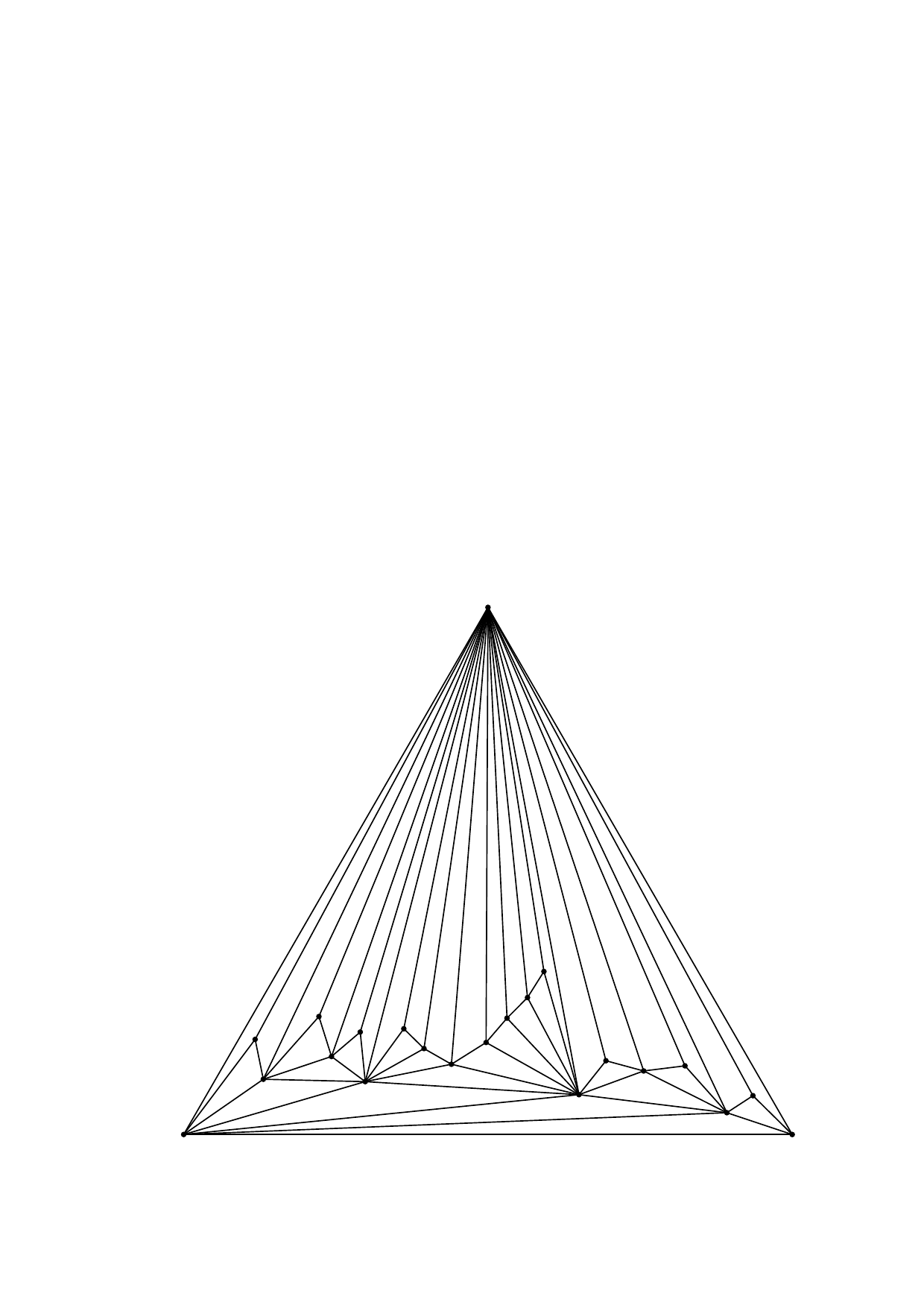}
    \caption{A stacked triangulation on 22 vertices that is universal for all trees on 22 vertices.}
    \label{fig:stacked22}
\end{figure}

\begin{example}
\label{ex:threetrees}
There is no planar graph on $48$ vertices containing $T_1 = C_{48,1}$, $T_2 = C_{48,2}$, and $T_3 = C_{48,8}$ as subgraphs.
\end{example}

\begin{proof}
Suppose $G$ is a planar graph on $48$ vertices containing $T_1,T_2,T_3$ as subgraphs. We fix for each $T_i$ a subgraph of $G$ isomorphic to $T_i$ and when we say $T_i$ we mean this specified copy of $T_i$ in $G$. Let $[48]$ be the vertex set of $G$. Let $u_1 \in [48]$  be the center vertex of the star $T_1$,  $v_1,v_2 \in [48]$ be the spine vertices of $T_2$ and $w_1,\dots, w_{8}\in[48]$ be the spine vertices of $T_3$. First suppose $u_1\neq v_1,v_2$.
Note that 5 vertices of $w_1,\dots,w_{8}$ are not $u_1,v_1,v_2$. Assume $w_1 \neq u_1,v_1,v_2$. Note that $w_1$ has degree at least $6$, hence it has three neighbours $z_1,z_2,z_3$ which are different from $u_1,v_1,v_2$. But then $G$ contains a $K_{3,3}$-minor with one part being $\{u_1\},\{v_1,v_2\},\{w_1\}$ and the other part being $\{z_1\},\{z_2\},\{z_3\}$.

Hence we can assume without loss of generality that $u_1=v_1$. Further let $A=N(u_2)\setminus \{v_1\}$ be the set of at least $23$ common neighbours of $v_1$ and $u_2$. To avoid a $K_{3,3}$ minor, every vertex $w_i\neq v_1,u_2$ can have at most $2$ neighbours in $A$. Deleting $v_1,u_2$ from $T_3$ can partition the spine of $T_{3}$ into at most $3$ parts. To avoid a $K_{3,3}$, each part of the spine can have at most $2$ of their leaves on the spine and the total number of leaves of these parts is at least $6\cdot 5=30$. Hence $24$ leaves have to be outside  $A$, in particular, one of $v_1,u_2$ is a leave of $T_3$. Hence, deleting $v_1,u_2$ cuts the spine of $T_3$ into at most two parts, at most $4$ of the $35$ leaf vertices can be in $A$, hence $31$ vertices have to be outside $A$, which is not possible.
\end{proof}

\paragraph{How the script works}
First, the script uses the graph enumerator nauty \cite{McKayPiperno14}
to generate the list of all $n$-vertex trees. 
We then use the planar graph enumerator plantri \cite{BrinkmannMcKay2007}
to iterate over all possible candidates
and test whether it is universal for all trees.
Since finding a subgraphs is a notoriously hard problem (at least as hard as graph isomorphism),
we employ a SAT solver.
Given a tree $T=(V_T,E_T)$ and a planar graph $G=(V_G,E_G)$ on~$[n]$,
we create a SAT instance as following.
To find an injection $\pi:V_T \to V_G$
such that for every edge $\{a,b\} \in E_T$ of the tree 
the graph contains the edge $\{\pi(a),\pi(b)\} \in E_G$,
we use the variables $X_{a,b}$ to indicate $\pi(a) = b$,
and add for every $\{a,b\} \in E_T$ with $\{p,q\} \not\in E_G$
a clause $\neg X_{a,p} \vee \neg X_{b,q}$ that prevents 
that adjacent vertices $a$ and $b$ are simultaneously 
mapped to a non-adjacent pair $p$ and~$q$.
To solve the CNF instance, we use the python interface pysat to the solver picosat.
To test whether a planar graph is a stacked triangulation, 
we used a recursive routine which tries to remove stacked vertices. 
Our script can be run with the command
\verb|sage enum_universal_sat.sage [n]|.
We also provide an additional parameter \verb|-s| that restricts the search to stacked triangulations.

\section{Concluding remarks}
\label{sec:concluding}

In this paper we considered subgraph-universal planar graphs for triples of trees on~$n$ vertices and for all trees on~$n$ vertices. 
It would be interesting to investigate the minimum number of vertices of a subgraph-universal planar graph for all possible sets of $k$ trees on~$n$ vertices in dependence of $k$.
Studying universal graphs for a constant number of trees might help to understand universal graphs for all $n$-vertex trees.
In particular, we wonder whether there is a universal graph with a quadratic number of vertices. 

Extending the study of minor-free graph classes, a natural question which arises is whether there is a polynomial size $K_{t+1}$-minor-free universal graph for the class of $K_t$-minor-free graphs. Although we have not found a polynomially sized universal planar graph for outerplanar graphs, allowing $K_{3,3}$'s could help in significantly reducing the size of the universal planar graph we found. In particular, starting with the constructing for path-like outerplanar graphs in \cref{lem:path-like}, one could add edges between vertices which are stacked in adjacent copies of the power of the path without creating a $K_5$. On the other hand it is likely that there is no subexponential $K_t$-minor-free universal graph for $n$-vertex $K_t$-minor free graphs and this can potentially be shows using similar arguments as for the lower bound for planar universal graph for planar graphs in \cref{thm:planarlowerbound}.

Our construction for a universal outerplanar graph for $n$-vertex trees is not induced-universal because any new Jordan separator is always connected to the previous Jordan separator (this edge is sometimes needed). However, we can embed each $n$-vertex tree in our construction in such a way that we can reconstruct it from its vertex set. We wonder whether the number of vertices of an induced-universal planar graph for all $n$-vertex trees is polynomial or superpolynomial in $n$.

\bibliographystyle{plainurl}
\bibliography{references}

\end{document}